\newcommand{\tF}{\tilde{F}}
\newtheorem{theorem}{Theorem}[section]
\newtheorem{lemma}[theorem]{Lemma}
 \theoremstyle{definition}
\newtheorem{definition}[theorem]{Definition}
\theoremstyle{remark}
\newtheorem{remark}[theorem]{Remark}
\numberwithin{equation}{section}
\begin{document}

\title[curvature estimates]
{Curvature estimates for $p$-convex hypersurfaces of prescribed curvature}

\author{Weisong Dong}
\address{School of Mathematics, Tianjin University, 135 Yaguan Road,
Jinnan, Tianjin, China, 300354}
\email{dr.dong@tju.edu.cn}


\begin{abstract}

In this paper, we establish curvature estimates for $p$-convex hypersurfaces in $\mathbb{R}^{n+1}$ of prescribed curvature
with $p \geq \frac{n}{2}$.
The existence of a star-shaped hypersurface of prescribed curvature is obtained.
We also prove a type of interior $C^2$ estimates for solutions to the Dirichlet problem of the corresponding equation.

\emph{Mathematical Subject Classification (2010):} 53C42; 53C21; 35J60.

\emph{Keywords:}  curvature estimates; $p$-convex hypersurfaces; interior estimates.

\end{abstract}

\maketitle

\section{Introduction}

Let $M\subset \mathbb{R}^{n+1}$ be a closed hypersurface and let
$\kappa (X) = (\kappa_1, \cdots, \kappa_n)$ be the principal curvatures of $M$ at $X$.
Given $1\leq p \leq n$, a $C^2$ regular hypersurface $M$ is called \emph{$p$-convex} if, at each $X\in M$,
$\kappa(X)$ satisfies
\[
\kappa_{i_1} + \cdots + \kappa_{i_p} \geq 0, \: \forall \;  1\leq i_1 < \cdots < i_p\leq n.
\]
In other words, the sum of the $p$ smallest principal curvatures is nonnegative at each point of $M$.
The notion of $p$-convexity goes back to Wu \cite{Wu} and has been studied extensively by Wu \cite{Wu}, Sha \cite{Sha1,Sha2} and Harvey-Lawson \cite{HL12,HL13}.

In this paper, we are interested in finding a $p$-convex hypersurface $M\subset \mathbb{R}^{n+1}$ of prescribed curvature as below
\begin{equation}
\label{eqn}
\Pi_{1\leq i_1 < \cdots < i_p\leq n} (\kappa_{i_1} + \cdots + \kappa_{i_p}) = f(X, \nu(X)),\; \forall\;  X \in M,
\end{equation}
where $\nu(X)$ is the unit outer normal of $M$ at $X$, the function $f (X, \nu) \in C^2 (\Gamma)$ is positive and
$\Gamma$ is an open neighborhood of unit normal bundle of $M$ in $\mathbb{R}^{n+1} \times \mathbb{S}^n$.
The Gaussian curvature equation, that corresponds to $p=1$ in \eqref{eqn}, was studied by Oliker \cite{Oliker}.
The mean curvature equation corresponding to $p=n$ in \eqref{eqn} was studied by Bakelman-Kantor \cite{BK} and Treibergs-Wei \cite{TreW}.
For general curvature equations, see Caffarelli-Nirenberg-Spruck \cite{CNS4} and Gerhardt \cite{Gerhardt}.
When $p=n-1$, the equation was studied by Chu-Jiao \cite{CJ} and,
in complex settings, it is related to
the Gauduchon conjecture which was
solved by Sz\'ekelyhidi-Tosatti-Weinkove \cite{S-T-W}.
For some previous work on this topic, see Tosatti-Weinkove \cite{TW1,TW2} and Fu-Wang-Wu \cite{FWW1,FWW2}.

It is of great interest in geometry and PDEs to derive a $C^2$ estimate for equation \eqref{eqn} for general $f(X,\nu(X))$.
We have the following main result.
\begin{theorem}
\label{thm1}
Suppose $M\subset \mathbb{R}^{n+1}$ is a closed star-shaped $p$-convex hypersurface
with $p  \geq \frac{n}{2}$ satisfying the curvature equation \eqref{eqn}.
Then, there is a positive constant
$C$ such that
\begin{equation}
\label{esti}
\sup_{X\in M, i = 1,\cdots, n}|\kappa_i(X)| \leq C ,
\end{equation}
where $C$ depends on $n$, $p$, $|M|_{C^1}$, $\inf f$ and $|f|_{C^2}$.
\end{theorem}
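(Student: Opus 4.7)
By $p$-convexity one has $\kappa_{\min}\geq -(p-1)\kappa_{\max}$ pointwise, so it suffices to bound $\kappa_{\max}$ from above. Exploiting star-shapedness, the support function $u=\langle X,\nu\rangle$ stays bounded away from zero on $M$, and I would apply the maximum principle to a test function of the form
\[
\Phi(X)=\log\kappa_{\max}(X)+\phi(X),
\]
with $\phi$ an auxiliary function built from $|X|^2$ and $u$ whose free constants are tuned at the end. Let $X_0$ attain $\max_M\Phi$ and choose a local orthonormal frame at $X_0$ diagonalising the Weingarten map with $h_{11}=\kappa_1=\kappa_{\max}$; the critical-point condition and the second-order inequality $F^{ii}\Phi_{;ii}\leq 0$ are the starting relations.

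Using the Ricci and Codazzi identities to pass from $F^{ii}h_{11;ii}$ to $F^{ii}h_{ii;11}$ generates quadratic curvature corrections, in particular a $+F^{ii}\kappa_i^2$-type contribution and a $-N\kappa_1 f$ term (via the homogeneity identity $F^{ii}\kappa_i=Nf$ with $N=\binom{n}{p}$). Differentiating \eqref{eqn} twice along $e_1$ then yields
\[
F^{ii}h_{ii;11}=f_{11}-F^{pp,qq}h_{pp;1}h_{qq;1}-2\sum_{p<q}\frac{F^{pp}-F^{qq}}{\kappa_p-\kappa_q}(h_{pq;1})^2.
\]
The key structural input is that $G:=\log F$ is concave on the $p$-convex cone, since each factor $\log(\kappa_{i_1}+\cdots+\kappa_{i_p})$ is the logarithm of a partial trace, hence concave in the matrix variable. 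Concavity gives $-F^{pp,qq}h_{pp;1}h_{qq;1}\geq -f_1^2/f$ and makes the off-diagonal Codazzi sum non-negative, so the latter can be discarded.

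The main obstacle, and the place where the hypothesis $p\geq n/2$ enters essentially, is arranging the constants in $\phi$ so that the good contributions (the $F^{ii}\kappa_i^2$ produced above and the positivity of $\sum_iF^{ii}$ from the Gauss identity $(|X|^2/2)_{;ii}=1-\kappa_i u$) dominate the bad squared-gradient term $(\phi')^2F^{ii}u_i^2$ and the homogeneity term $N\kappa_1 f$. Using the explicit formula
\[
\frac{F^{ii}}{F}=\sum_{|I|=p,\,I\ni i}\frac{1}{\sum_{j\in I}\kappa_j},
\]
one extracts lower bounds on individual $F^{ii}$: under $p\geq n/2$, any two $p$-subsets of $\{1,\dots,n\}$ must overlap, so each partial sum $\sum_{j\in I}\kappa_j$ contains at least one of the large eigenvalues, yielding estimates of the schematic type $F^{11}\kappa_1\gtrsim f$ and $\sum_iF^{ii}\gtrsim f/\kappa_1$. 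Feeding these back into $F^{ii}\Phi_{;ii}\leq 0$ and tuning $\phi$ forces $\kappa_1(X_0)\leq C$ with $C$ depending only on $n$, $p$, $|M|_{C^1}$, $\inf f$, $|f|_{C^2}$; since $\Phi$ attains its maximum at $X_0$, this propagates to all of $M$ and gives \eqref{esti}.
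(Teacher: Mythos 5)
Your overall strategy (maximum principle applied to $\log\kappa_{\max}$ plus an auxiliary function built from $|X|^2$ and $u$) is the same as the paper's, but the proposal has a decisive gap: you discard the off-diagonal concavity terms $-\frac{2}{h_{11}}\sum_{i\geq 2}F^{1i,i1}h_{11i}^2$ on the grounds that they are nonnegative. They are nonnegative, but they are not disposable. The second derivative of $\log h_{11}$ produces the bad term $-\sum_i F^{ii}h_{11i}^2/h_{11}^2$, and for $i\geq 2$ the only way to cancel it without incurring a loss of size $A^2\sum F^{ii}$ over \emph{all} indices is precisely through those off-diagonal terms, via the inequality $-2F^{1i,i1}+2F^{11}/\kappa_1\geq (1+\delta)F^{ii}/\kappa_1$ (the paper's Lemma 3.4, which in turn relies on Lemma 3.3's bound $|\kappa_{n-p+1}|,\dots,|\kappa_n|\leq CA$). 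If instead you absorb all of $-\sum_i F^{ii}h_{11i}^2/h_{11}^2$ by Cauchy--Schwarz and the critical equation, you arrive at $0\geq c\,\tilde F^{ii}h_{ii}^2-CA^2\sum\tilde F^{ii}$, which only yields $\kappa_1\leq CA^2\sum\tilde F^{ii}$; since $\sum\tilde F^{ii}$ is not a priori bounded (it blows up when $\kappa_{n-p+1}+\cdots+\kappa_n$ is small), this does not close. The paper keeps the off-diagonal terms exactly so that the positive term $\frac{A}{2}\sum\tilde F^{ii}$ survives to the end, concludes $\sum\tilde F^{ii}\leq C$, deduces $\tilde F^{11}\geq 1/C$, and only then bounds $h_{11}$.

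Two further problems. First, your account of where $p\geq\frac{n}{2}$ enters is not correct: ``any two $p$-subsets overlap'' requires $2p>n$, not $2p\geq n$, and the schematic bounds you extract ($F^{11}\kappa_1\gtrsim f$ and $\sum F^{ii}\gtrsim f/\kappa_1$) hold for every $p$ and do not suffice. The hypothesis is used exactly once: the $\nu$-dependence of $f$ (which your displayed formula suppresses by writing only $f_{11}$) contributes a bad term $-C\kappa_1$ after dividing the twice-differentiated equation by $h_{11}$, and absorbing it requires showing, in the regime where $\kappa_n\geq-\delta\kappa_1$ and the smallest $p$-fold sum is at least $\delta/\kappa_1$, that $F^{nn}\geq c_\delta\kappa_1$; this reduces to $C_{n-1}^{p-1}\geq C_{n-1}^{p}$, i.e.\ to $p\geq\frac{n}{2}$ (the paper's Lemma 3.2). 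Your proposal never isolates this term. Second, your justification of concavity is flawed: for a fixed intermediate index set, $\lambda_{i_1}(A)+\cdots+\lambda_{i_p}(A)$ is neither a partial trace nor a concave function of the matrix $A$, so $\log F$ is not a sum of logarithms of concave linear functionals. The concavity of $\tilde F=F^{1/C_n^p}$ (equivalently of $\log F$) on $P_p$ is a nontrivial result of Dinew that must be invoked, not derived factor by factor.
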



We remark that in the theorem only a few conditions are assumed on $f$.
Usually, to derive $C^2$ estimate for elliptic equations which are not strictly elliptic there should be some extra assumptions on $f$
due to the dependency on $\nu(X)$.
We refer the reader to Ivochkina \cite{I1,I2}, Guan-Guan \cite{GG}, Guan-Lin-Ma \cite{GLM}, Guan-Li-Li \cite{GLL} and Guan-Jiao \cite{GJ}
for more details.
Moreover, Guan-Ren-Wang in \cite{GRW} showed that
for the following curvature equation
\[
\frac{\sigma_k}{\sigma_l} (\kappa (X)) = f(X, \nu(X)),\; \forall X \in M,
\]
where $0 < l < k \leq n$ and $\sigma_k$ is the $k$-th elementary symmetric function,
estimate \eqref{esti} fails generally,
though it may hold for special $f$ as in Guan-Guan \cite{GG}.
When $l=0$, some results are known for general $f$. For instance,
estimate \eqref{esti} was proved for $k=n$ by Caffarelli-Nirenberg-Spruck \cite{CNS1}
 and, for $2 \leq k \leq n$, Guan-Ren-Wang \cite{GRW} obtained the estimate for convex solutions.
For $k= n-1$ and $n-2$, estimate \eqref{esti} was established by Ren-Wang \cite{RW1,RW2}.
They also conjectured that the estimate still holds for $k> \frac{n}{2}$ in \cite{RW2,RW3}.
When $f$ is independent of $\nu$, Caffarelli-Nirenberg-Spruck \cite{CNS4} proved
the $C^2$ estimate for a general class of fully nonlinear curvature equations.
For hypersurfaces of prescribed curvature in Riemannian manifolds and Minkowski space, see \cite{CLW} and \cite{WX,RWX}.
We also refer the reader to Guan-Zhang \cite{GZ} and references therein for a class of curvature equations arising
from convex geometry.

To obtain the existence of a $p$-convex hypersurface satisfying the prescribed curvature equation \eqref{eqn},
we assume the following two conditions on $f$.
The first one is that there exists two positive constants $r_1 < 1 < r_2$ such that
\begin{equation}
\label{f1}
\begin{aligned}
f\Big(X, \frac{X}{|X|}\Big) \geq &\; \frac{p^{C_n^p}}{r_1^{C_n^p}}, \; \mbox{for}\; |X| = r_1;\\
f\Big(X, \frac{X}{|X|}\Big) \leq &\; \frac{p^{C_n^p}}{r_2^{C_n^p}}, \; \mbox{for}\; |X| = r_2.
\end{aligned}
\end{equation}
This condition is used to derive $C^0$ estimates.
The second one is that for any fixed unit vector $\nu$,
\begin{equation}
\label{f2}
\frac{\partial}{\partial \rho} \Big(\rho^{ C_n^p } f (X, \nu)\Big) \leq 0, \; \mbox{where}\; \rho = |X|,
\end{equation}
and will be used to derive $C^1$ estimates. Actually, with suitable assumptions of $f$,
Li \cite{L} proved that the interior gradient estimate holds.

By the continuity method argument as in \cite{CNS4}, we can obtain the following
result.
\begin{theorem}
\label{thm2}
Let $f\in C^2 ( (\overline{B_{r_2}} \backslash B_{r_1} )\times \mathbb{S}^n )$ be a positive function satisfying \eqref{f1} and \eqref{f2}.
Then equation \eqref{eqn} has a unique $C^{3, \alpha}$ star-shaped $p$-convex solution $M$
in $\{X \in \mathbb{R}^{n+1} : r_1 \leq |X| \leq r_2\}$ for any $\alpha \in (0, 1)$
as long as $p \geq \frac{n}{2}$.
\end{theorem}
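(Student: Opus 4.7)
\smallskip

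\noindent\textbf{Plan of proof for Theorem \ref{thm2}.} Since every admissible hypersurface is star-shaped with respect to the origin, I would write $M$ as a radial graph $X = \rho(x)x$ with $\rho : \mathbb{S}^n \to (0,\infty)$ and rewrite \eqref{eqn} as a fully nonlinear PDE
\[
G(D^2\rho, D\rho, \rho, x) \;=\; f\bigl(\rho(x)x, \nu(\rho,D\rho)\bigr)
\]
on $\mathbb{S}^n$. The principal curvatures $\kappa_i$ are the eigenvalues of a matrix built from $\rho$, $D\rho$, $D^2\rho$; on the $p$-convex cone $\Gamma_p := \{\lambda:\lambda_{i_1}+\cdots+\lambda_{i_p}\ge 0\}$ the operator
$F(\lambda) := \prod_{i_1<\cdots<i_p}(\lambda_{i_1}+\cdots+\lambda_{i_p})$ is positive, and $\log F$ is a sum of concave functions of $\lambda$, so the equation is \emph{elliptic and concave} in $D^2\rho$ at any admissible solution.

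The natural continuity path is
\[
F\bigl(\kappa(X)\bigr) \;=\; f_t(X,\nu), \qquad f_t := (1-t)\,\frac{p^{C_n^p}}{|X|^{C_n^p}} + t\, f(X,\nu), \quad t\in[0,1],
\]
so that $t=0$ admits the explicit solution $|X| = 1$ (any sphere of unit radius has $\kappa_i\equiv 1$ and $F\equiv p^{C_n^p}$), while $t=1$ is the desired equation. One checks that each $f_t$ still satisfies \eqref{f1} and \eqref{f2}. I would then let $\mathcal T := \{t\in[0,1] : \text{a } C^{3,\alpha} \text{ star-shaped } p\text{-convex solution exists}\}$ and show $\mathcal T$ is nonempty, open, and closed.

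\emph{A priori estimates} (closedness). For $C^0$, condition \eqref{f1} together with the maximum principle at interior minima/maxima of $\rho$ forces $r_1\le \rho\le r_2$, since at an interior extremum of $\rho$ one has $\kappa_i = 1/\rho$ for all $i$, hence $F(\kappa) = p^{C_n^p}/\rho^{C_n^p}$, and \eqref{f1} rules out $\rho<r_1$ or $\rho>r_2$. For $C^1$, I would use condition \eqref{f2}: differentiating the function $\rho^{C_n^p} F(\kappa) = \rho^{C_n^p} f$ along a maximum of the gradient quantity $|D\rho|^2/\rho^2$ (or, equivalently, of $\langle X,\nu\rangle^{-1}$) and applying the monotonicity in \eqref{f2} gives a uniform $C^1$ bound, as in Caffarelli--Nirenberg--Spruck \cite{CNS4}. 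The crucial $C^2$ bound is precisely Theorem \ref{thm1} and requires $p\ge n/2$; combined with the $C^0$--$C^1$ estimates it gives a uniform $C^2$ bound. Uniform ellipticity then follows (all $\kappa_i$ lie in a compact subset of $\Gamma_p$), and the Evans--Krylov theorem for concave equations upgrades this to $C^{2,\alpha}$, after which standard Schauder estimates give $C^{3,\alpha}$ bounds independent of $t$.

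\emph{Openness and uniqueness.} At a solution $\rho_t$, the linearization of $G - f_t$ is a second-order elliptic operator; a direct computation using the homogeneity of $F$ and condition \eqref{f2} shows it has strictly negative zeroth-order part (or at worst nonpositive, with the strong maximum principle excluding kernel). Hence the linearization is invertible between the appropriate Hölder spaces, so the implicit function theorem yields openness, and the same maximum principle argument applied to the difference of two solutions yields uniqueness. Combining openness with the closedness provided by the uniform $C^{3,\alpha}$ bounds gives $\mathcal T = [0,1]$, which proves Theorem \ref{thm2}.

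\emph{Main obstacle.} The only substantive difficulty is the $C^2$ estimate, which is the content of Theorem \ref{thm1} and is where the hypothesis $p\ge n/2$ enters; the $C^0$ and $C^1$ steps are adapted directly from conditions \eqref{f1}--\eqref{f2}, and the passage from $C^2$ to $C^{3,\alpha}$ relies on the concavity of $\log F$ on $\Gamma_p$, which is standard.
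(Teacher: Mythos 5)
Your outline follows the paper's proof of Theorem \ref{thm2} essentially step by step: the same continuity path starting from the unit sphere, the $C^0$ bound from \eqref{f1} at extrema of $\rho$, the $C^1$ bound from \eqref{f2}, the $C^2$ bound from Theorem \ref{thm1}, and Evans--Krylov plus Schauder to close the argument.

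There is, however, one concrete gap. The paper does not use the unperturbed path $f_t=(1-t)p^{C_n^p}|X|^{-C_n^p}+tf$, but rather
\[
f^t(X,\nu) = t f(X,\nu) + (1-t)\, p^{C_n^p} \Bigl[ \tfrac{1}{|X|^{C_n^p}} + \varepsilon \bigl(\tfrac{1}{|X|^{C_n^p}} - 1\bigr) \Bigr],
\]
precisely so that $f^t$ satisfies \eqref{f1} and \eqref{f2} with \emph{strict} inequalities for $0\le t<1$. This strictness is used in two places where your sketch is too optimistic. First, in the $C^0$ estimate: \eqref{f1} only constrains $f$ on the spheres $|X|=r_1$ and $|X|=r_2$, so one must argue that the solution, which starts strictly inside the annulus at $t=0$, can never touch $|X|=r_2$; at a first touching point the maximum of $\rho_t$ equals $r_2$ and one needs $F(\kappa)\ge p^{C_n^p}/r_2^{C_n^p} > f^t$, i.e.\ a strict inequality in \eqref{f1}, to reach a contradiction. (Also note that at a maximum of $\rho$ one only gets $\kappa_i\ge 1/\rho$, not equality, so the conclusion is $F(\kappa)\ge p^{C_n^p}/\rho^{C_n^p}$.) Second, in the openness step: if \eqref{f2} holds with equality along the path (e.g.\ $f=c|X|^{-C_n^p}$), the zeroth-order coefficient of the linearization vanishes identically and, by the scaling invariance $F(\kappa(\lambda M))=\lambda^{-C_n^p}F(\kappa(M))$, constants lie in its kernel; your appeal to the strong maximum principle does not exclude this case. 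The standard remedy is exactly the paper's $\varepsilon$-perturbation: one solves for $t<1$ with strict inequalities and obtains the solution at $t=1$ as a limit using the uniform a priori estimates. With that modification your argument matches the paper's.
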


The method of proving Theorem \ref{thm1} can be applied to obtain
an interior $C^2$ estimate for the Dirichlet problem of the corresponding equation in the Euclidean space.
Suppose that $\Omega$ is a bounded domain in $\mathbb{R}^n$.
For a function $u \in C^2(\Omega)$, denote by $\lambda (D^2 u) = (\lambda_1, \cdots, \lambda_n)$ the eigenvalues of the Hessian $D^2 u$.
We say that $u\in C^2 (\Omega)$ is \emph{$p$-plurisubharmonic} if the eigenvalues of $D^2 u$ satisfy
$\lambda_{i_1} + \cdots + \lambda_{i_p} \geq 0$, for all $1\leq i_1 < \cdots < i_p\leq n$ (see \cite{HL13}).
Given a $C^2$ $p$-plurisubharmonic function $v$ on $\bar \Omega$,
consider the following Dirichlet problem,
\begin{equation}
\label{eqn-Dirichlet}
\Pi_{1\leq i_1 < \cdots < i_p\leq n} (\lambda_{i_1} + \cdots + \lambda_{i_p}) = f(x,u, D u), \;\mbox{in}\; \Omega
\end{equation}
with boundary data
\[
 u = v, \;\mbox{on}\; \partial \Omega,
\]
where $f\in C^2(\bar \Omega \times \mathbb{R} \times \mathbb{R}^n)$ is a positive function.
By the same argument as that of Theorem \ref{thm1},
we can prove the following interior estimate.
\begin{theorem}
\label{thm3}
Suppose that a $p$-plurisubharmonic function $u\in C^4(\Omega)\cap C^{1,1} (\bar \Omega)$ is a solution to the Dirichlet problem of equation \eqref{eqn-Dirichlet} and satisfies $ u< v $ in $\Omega$.
Then, there exist constants $C$ and $\beta$ depending only on $n,p$, $|u|_{C^1}$, $|v|_{C^1}$,
$\inf f$, $|f|_{C^2}$ and $\Omega$ such that
\begin{equation}
\label{esti-2}
\sup_{\Omega} (v-u)^\beta \Delta u \leq C
\end{equation}
as long as $p \geq \frac{n}{2}$.
\end{theorem}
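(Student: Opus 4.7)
The plan is to follow the strategy behind Theorem \ref{thm1}, replacing the global cut-off by a power of $v-u$. Consider the test function
\[
\Phi(x) = \beta \log(v-u)(x) + \log \lambda_1(D^2 u(x)) + \tau |Du(x)|^2,
\]
where $\lambda_1 \geq \cdots \geq \lambda_n$ are the eigenvalues of $D^2 u$ and $\beta,\tau>0$ are to be chosen. Since $u=v$ on $\partial\Omega$ and $u<v$ in $\Omega$, $\log(v-u)$ blows down at the boundary and $\Phi$ attains its maximum at some interior point $x_0$. The goal is to bound $\lambda_1(x_0)(v-u)^\beta(x_0)$: because $\Delta u \leq n\lambda_1$ and all eigenvalues are controlled by $\lambda_1$ on the $p$-convex cone, this will give \eqref{esti-2}. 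If $\lambda_1$ is not simple at $x_0$, the usual symmetric perturbation of $D^2 u$ allows us to treat $\lambda_1$ as a smooth function of the Hessian near $x_0$.

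Write $F(D^2u)=\log G(\lambda(D^2u))$, where $G$ denotes the left-hand side of \eqref{eqn-Dirichlet}, so the equation reads $F(D^2u)=\log f(x,u,Du)$, and set $F^{ii}=\partial F/\partial u_{ii}$ at $x_0$ in coordinates diagonalizing $D^2u$. The first-order condition $\Phi_i(x_0)=0$ yields $\tfrac{u_{11i}}{\lambda_1}=-\beta\tfrac{(v-u)_i}{v-u}-2\tau u_ku_{ki}$, while the second-order condition, combined with the commutator identity $(\lambda_1)_{ii}=u_{11ii}+2\sum_{k>1}u_{1ki}^2/(\lambda_1-\lambda_k)$, gives
\[
0 \ge \frac{\sum_i F^{ii}u_{11ii}}{\lambda_1}+\frac{2}{\lambda_1}\sum_{i,\,k>1}\frac{F^{ii}u_{1ki}^2}{\lambda_1-\lambda_k}-\sum_i\frac{F^{ii}u_{11i}^2}{\lambda_1^2}+\beta\sum_i F^{ii}\frac{(v-u)_{ii}}{v-u}-\beta\sum_i F^{ii}\frac{((v-u)_i)^2}{(v-u)^2}+\tau\,(\mathrm{l.o.t.}).
\]
Differentiating $F=\log f$ twice in the $e_1$ direction and invoking the concavity of $F=\sum_{S}\log(\sum_{i\in S}\lambda_i)$ on the $p$-convex cone (the sum being taken over $p$-element subsets $S\subset\{1,\dots,n\}$) yields $\sum_i F^{ii}u_{11ii}\ge -\sum_{i,j}F^{ii,jj}u_{ii1}u_{jj1}+O(1+\lambda_1)$, where the error depends on $|f|_{C^2}$, $|u|_{C^1}$ and $|v|_{C^1}$.

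The main obstacle is the third-order algebraic inequality required to absorb the bad term $\sum_i F^{ii}u_{11i}^2/\lambda_1^2$, namely
\[
-\sum_{i,j}F^{ii,jj}u_{ii1}u_{jj1}+\frac{2}{\lambda_1}\sum_{i,\,k>1}\frac{F^{ii}u_{1ki}^2}{\lambda_1-\lambda_k}\ \ge\ (1-\varepsilon)\sum_i\frac{F^{ii}u_{11i}^2}{\lambda_1^2},
\]
which I expect to hold precisely when $p\geq n/2$. I would prove it factor by factor: for each $p$-subset $S$, the mixed second derivatives of $\log(\sum_{i\in S}\lambda_i)$ have an explicit rank-one form, and the pairing required by Ren--Wang/Guan--Ren--Wang type arguments goes through provided every pair $\{1,k\}$ sits in enough of the subsets $S$, a counting that becomes available exactly when $p\geq n/2$. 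Once this algebraic step is established, the gradient-square term $-\beta\sum F^{ii}((v-u)_i)^2/(v-u)^2$, linked to the third-order terms through the critical equation, together with the freedom to enlarge $\tau$ and $\beta$, absorbs the remaining errors; the residual $\beta\sum F^{ii}(v-u)_{ii}/(v-u)$ is bounded by $v\in C^2(\bar\Omega)$, the equation $\sum F^{ii}u_{ii}\leq C$, and a uniform positive lower bound $\sum F^{ii}\geq c>0$ valid on the $p$-convex cone for large $\lambda_1$. The resulting inequality forces $\lambda_1(x_0)(v-u)(x_0)^\beta\leq C$, and \eqref{esti-2} follows.
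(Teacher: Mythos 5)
Your test function and overall architecture (interior maximum of $\beta\log(v-u)+\log\lambda_1+{}$gradient term, concavity of the logarithm/root of the operator, critical equation, final absorption) match the paper's, but the proposal has a genuine gap at the decisive step, and it misplaces where the hypothesis $p\geq \frac n2$ actually enters.

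The gap is the displayed third-order inequality
\[
-\sum_{i,j}F^{ii,jj}u_{ii1}u_{jj1}+\frac{2}{\lambda_1}\sum_{i,\,k>1}\frac{F^{ii}u_{1ki}^2}{\lambda_1-\lambda_k}\ \ge\ (1-\varepsilon)\sum_i\frac{F^{ii}u_{11i}^2}{\lambda_1^2},
\]
which you assert "factor by factor'' via a counting of $p$-subsets. This is exactly the Guan--Ren--Wang/Ren--Wang route that the paper explicitly avoids because it is very hard even for $\sigma_k$ (and is still conjectural there for general $k>\frac n2$); you give no actual proof, and as stated the inequality is doubtful: the $i=1$ term $F^{11}u_{111}^2/\lambda_1^2$ on the right receives no contribution from the $k>1$ sum on the left, so it would have to be dominated entirely by the diagonal concavity term, which is not established. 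The paper instead (i) never tries to prove such a full inequality for all $i$; it keeps only the off-diagonal concavity terms $-2\sum_{i\ge2}\tilde F^{1i,i1}h_{11i}^2$, (ii) first proves Lemma \ref{lem2}, namely $C\lambda_1\le \frac{a}{2}\tilde F^{ii}h_{ii}^2/(u-a)+\frac{A}{2}\sum\tilde F^{ii}$ --- and it is \emph{here}, in Subcase 2.2 via $C_{n-1}^{p-1}-C_{n-1}^{p}\ge0$, that $p\ge\frac n2$ is used, not in any third-order counting --- (iii) deduces from this that $|\lambda_{n-p+1}|,\dots,|\lambda_n|$ are a priori bounded (Lemma \ref{lem3}), which is the input making the easy comparison $-2F^{1i,i1}+2F^{11}/\lambda_1\ge(1+\delta)F^{ii}/\lambda_1$ (Lemma \ref{lem4}) valid for all $i\ge2$, and (iv) disposes of the leftover $F^{11}h_{111}^2/h_{11}^2$ and $F^{11}h_{11i}^2/h_{11}^2$ terms using the critical equation together with the smallness $F^{11}\le C/\lambda_1$. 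None of steps (ii)--(iv) appears in your proposal, and without them your absorption of "the remaining errors by enlarging $\tau$ and $\beta$'' is not justified; in particular the bound on the last $p$ eigenvalues is indispensable before any version of the $F^{1i,i1}$ comparison can be used.

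A smaller but real issue: your test function omits the $\frac{A}{2}|x|^2$ term. Its contribution $A\sum F^{ii}$ is one of the two good terms on the right of Lemma \ref{lem2} (it is what absorbs $C\lambda_1$ in both subcases of Case 2), and the term $\tau|Du|^2$ only produces the quadratic good term $\tau F^{ii}u_{ii}^2$, which does not substitute for $A\sum F^{ii}$ when the smallest eigenvalues are small in modulus. You would need to reinstate that term (or find another source of $\sum F^{ii}$) to run the argument.
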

\begin{remark}
As a byproduct of the proof of the above theorem, one can conclude the following global $C^2$ estimate for equation \eqref{eqn-Dirichlet}
\[
\sup_{\Omega} |D^2 u| \leq C (1 + \sup_{\partial \Omega} |D^2 u|),
\]
where $C$ is a constant as in Theorem \ref{thm3}.
\end{remark}

We shall only give an outline for the proof of Theorem \ref{thm3}, as it is almost the same as that of Theorem \ref{thm1}.
The estimate \eqref{esti-2} can also be seen in some sense as a generalization of Theorem 0.4 in \cite{Dinew}, since there the right hand side function $f$ does not depend on $Du$.
Such an estimate for the $k$-Hessian equation $\sigma_k (\lambda) = f(x, u)$ has been proved by Chou-Wang \cite{CW}.
The function $f$ depending on $\nu$ or $Du$ creates substantial difficulties to derive a $C^2$ estimate,
as the bad term $-Ch_{11}$ or $-Cu_{11}$ appears when one applies the maximum principle to the test function.
One way to overcome this is like in \cite{GRW, RW1, RW2} to control the bad third order terms,
which is very hard even for the $k$-Hessian equation.
Another way is as in \cite{CJ} to control $-Ch_{11}$ firstly by good terms (see Lemma \ref{lem2}),
and then the bad third order terms can be eliminated easily by Lemma \ref{lem4}.
Thanks to Dinew \cite{Dinew}, where many properties of the operator have been proved, we can follow the argument
in \cite{CJ}
to prove the estimate.

The paper is organized as follows. In Section 2, we recall some properties of the operator from \cite{Dinew}.
In Section 3, we prove the curvature estimate. In Section 4 we derive the gradient estimate and in Section 5 we apply the continuity method
to prove Theorem \ref{thm2}.
Finally, we give an outline of the proof of Theorem \ref{thm3} in Section 6.

\textbf{Acknowledgements}:
We would like to thank the anonymous referees for
helpful comments
and
especially for pointing out a mistake.
This work was partially supported by the National Natural Science Foundation of China, No.
11801405.

\section{Preliminaries}

Let $\mathbb{S}^n$ be the unit sphere in $\mathbb{R}^{n+1}$ and let $\nabla$ be the connection on it.
Assume that $M$ is star-shaped with respect to the origin, i.e. the position vector $X$ of $M$
can be written as $X(x) = \rho(x) x$, where $x\in \mathbb{S}^n$.
Then the unit outer normal of $M$ is given by
\[
\nu = \frac{\rho x - \nabla \rho}{\sqrt{\rho^2 + |\nabla \rho|^2}}.
\]
Let $\{e_1, \cdots, e_n\}$ be a smooth local orthonormal frame on $\mathbb{S}^n$.
Then the metric of $M$ is given by
$g_{ij} = \rho^2 \delta_{ij} + \rho_i \rho_j$,
and the second fundamental form of $M$ is
\begin{equation}
\label{secondForm}
h_{ij} = \frac{\rho^2 \delta_{ij} + 2 \rho_i \rho_j - \rho \rho_{ij}}{\sqrt{\rho^2 + |\nabla \rho|^2} }.
\end{equation}
The principal curvatures $\kappa = (\kappa_1, \cdots, \kappa_n)$ are the eigenvalues of $h_{ij}$
with respect to $g_{ij}$.

At a point $X_0$ in $M$, choose a local orthonormal frame
$\{e_1, e_2 \cdots, e_n\}$.
The following geometric formulas are well known:
\begin{equation}
\label{formula1}
\begin{aligned}
X_{ij} =&\; -h_{ij}\nu \;\;(\mbox {Gauss formula}),\\
(\nu)_i =&\; h_{ij} e_j \;\; (\mbox{Weingarten equation}),\\
h_{ijk} =&\; h_{ikj}\;\;(\mbox{Codazzi formula}),\\
R_{ijkl} =&\; h_{ik}h_{jl} -h_{il}h_{jk}\;\;(\mbox{Gauss equation}),
\end{aligned}
\end{equation}
where $R_{ijkl}$ is the $(4,0)$-Riemannian curvature tensor and the formula
\begin{equation}
\label{formula2}
h_{ijkl} =h_{klij} + h_{mk} (h_{mj} h_{il} - h_{ml}h_{ij}) + h_{mi} (h_{mj} h_{kl} - h_{ml} h_{kj}).
\end{equation}

We recall the $p$-convex cones introduced by Harvey and Lawson \cite{HL13}.
\begin{definition}
\label{def}
Let $p\in \{1, \cdots, n\}$. The cone $\mathcal{P}_p$ is defined by
\[
\mathcal{P}_p = \{ (\lambda_1, \cdots, \lambda_n) \in \mathbb{R}^n | \; \forall \; 1\leq i_1 < i_2 < \cdots < i_p\leq n,\;
\lambda_{i_1} + \cdots + \lambda_{i_p} > 0 \}.
\]
Associated to $\mathcal{P}_p$ is the cone of symmetric $n \times n$ matrices defined by
\[
P_p = \{A | \; \forall \; 1\leq i_1 < i_2 < \cdots < i_p\leq n,\;
\lambda_{i_1} (A) + \cdots + \lambda_{i_p} (A) > 0 \}.
\]
We call $A$ is $p$-positive if $A\in P_p$.
\end{definition}

For convenience, we introduce the following notations
\[
F (h_{ij}) := F (\kappa) = \Pi_{1\leq i_1 < \cdots < i_p\leq n} (\kappa_{i_1} + \cdots + \kappa_{i_p}) \; \mbox{and} \; \tilde F = F^{\frac{1}{C_n^p}},
\]
where $C_n^p = \frac{n!}{p! (n-p)!}$.
Equation \eqref{eqn} then can be written as
\begin{equation}
\label{eqn'}
\tilde F (h_{ij}) := \tilde F(\kappa) = \tilde f(X, \nu(X)),
\end{equation}
where $\kappa = (\kappa_1, \cdots, \kappa_n)$ and $\tilde f = f^{ \frac{1}{C_n^p} }$.
Denote
\[
F^{ij} = \frac{\partial F}{\partial h_{ij}}, \;
F^{ij,kl} = \frac{\partial^2 F}{\partial h_{ij} \partial h_{kl} },\;
\mbox{and} \; \mathcal{F} = \sum F^{ii}.
\]
Direct calculations show that
 \[
 \tilde F^{ij} = \frac{1}{C_n^p} F^{\frac{1}{C_n^p} - 1} F^{ij}
 \]
 and
 \[
 \tilde F^{ij,kl} = \frac{1}{C_n^p} F^{\frac{1}{C_n^p} - 1} F^{ij,kl}
 + \frac{1}{C_n^p} \Big( \frac{1}{C_n^p} - 1 \Big) F^{\frac{1}{C_n^p} - 2} F^{ij} F^{kl}.
 \]
We remark that $\tilde F$ is concave with respect to $h_{ij}$ by Lemma 1.13 and Corollary 1.14 in \cite{Dinew}.
And the equation is elliptic as the matrix$\{ \frac{\partial \tF}{\partial h_{ij}} \}$ is positive definite for $\{h_{ij}\} \in P_p$.

Now we do some basic calculations which will be used in the next section.
Our calculations are carried out at a point $X_0$ on the hypersurface $M$, and we use coordinates such that at this point
$\{ h_{ij} \}$ is diagonal and its eigenvalues with respect to $g_{ij}$ are ordered as $\kappa_1 \geq \kappa_2 \geq \cdots \geq \kappa_n$.
Note that $F^{ij} $ is also diagonal at $X_0$ and we have the following formulas
\[
F^{kk} = \frac{\partial F}{\partial \kappa_k} =
\sum_{k\in\{i_1, \cdots, i_p\} } \frac{F (\kappa)}{\kappa_{i_1} + \cdots + \kappa_{i_p} },
\]
for which we refer to Lemma 1.10 in \cite{Dinew}.
We also have formulas for the second order derivatives of $F$ at $X_0$ as below
\[
F^{kk,ll}= \frac{\partial^2 F}{\partial \kappa_k \partial \kappa_l} =
\sum_{ \substack{ k\in\{i_1, \cdots, i_p\} \\ l\in \{j_1, \cdots, j_p\} \\ \{i_1, \cdots, i_p\} \neq \{j_1, \cdots, j_p\}} }
\frac{F (\kappa)}{ (\kappa_{i_1} + \cdots + \kappa_{i_p}) (\kappa_{j_1} + \cdots + \kappa_{j_p}) },
\]
and, for $k\neq r$,
\[
F^{kr, r k} = \frac{F^{k k} - F^{r r} }{\kappa_k - \kappa_r}
=
- \sum_{ \substack{ k\notin\{i_1, \cdots, i_p\} \ni r  \\ r\notin \{j_1, \cdots, j_p\} \ni k
\\ \{i_1, \cdots, i_p\} \backslash \{r\} = \{j_1, \cdots, j_p\} \backslash \{k\}
}  }
\frac{ F(\kappa) }{( \kappa_{i_1} + \cdots + \kappa_{i_p} )(\kappa_{j_1} + \cdots + \kappa_{j_p})}.
\]
Otherwise, we have $F^{ij,kl} = 0$.
See Lemma 1.12 in \cite{Dinew} for the above formulas. These formulas can also be easily obtained from Theorem 5.5 in \cite{B}.
The following properties of the function $F$ which are very similar to the properties of $\sigma_k$ were proved by Dinew \cite{Dinew}.

\begin{lemma}[\cite{Dinew}]
\label{Dinew1}
Suppose that the diagonal matrix $A = diag(\lambda_1, \cdots, \lambda_n)$ belongs to $P_p$ and that $\lambda_1 \geq \cdots \geq \lambda_n$.
Then,
\begin{itemize}
\item [(1)] $\tF^{11} (A) \lambda_1 \geq \frac{1}{n} \tF (A);$
\item [(2)] $\sum_{k=1}^n \tF^{kk} (A) \geq p;$
\item [(3)] $\sum_{k=1}^n F^{kk} (A) \lambda_{k} = C_n^p F(A);$
\item [(4)] there is a constant $\theta = \theta(n, p)$ such that $F^{jj} (A) \geq \theta \sum F^{ii}$ for all $j \geq n-p+1.$
\end{itemize}
\end{lemma}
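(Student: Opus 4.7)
The plan is to work directly from the explicit formulas for $F^{kk}$ already recalled in the excerpt: writing $S_I = \kappa_{i_1} + \cdots + \kappa_{i_p}$ for each $p$-subset $I = \{i_1,\dots,i_p\}$, one has $F = \prod_I S_I$ (product over $C_n^p$ subsets) and $F^{kk} = \sum_{I\ni k} F/S_I$. The statements (1)--(4) then reduce to elementary inequalities about the quantities $S_I$.

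Part (3) is Euler's identity: $F$ is homogeneous of degree $C_n^p$, so $\sum_k F^{kk}\kappa_k = C_n^p F$. For part (2), summing the formula for $F^{kk}$ first over $k$ gives $\sum_k F^{kk} = p\sum_I F/S_I$, so $\sum_k \tilde F^{kk} = (p/C_n^p)\,\tilde F \sum_I 1/S_I$; the inequality AM-HM applied to the positive numbers $S_I$ (equivalently, HM $\le$ GM) gives $\sum_I 1/S_I \ge C_n^p/\tilde F$, and the bound follows. For part (1), I would bound $S_I \le p\kappa_1$ for every $I\ni 1$ (since $\kappa_1$ is the maximum of all $\kappa_i$ and $|I|=p$), use that there are exactly $C_{n-1}^{p-1}$ such $I$, and plug into $\tilde F^{11}\kappa_1 = (\tilde F/C_n^p)\,\kappa_1\sum_{I\ni 1}1/S_I$; the identity $C_{n-1}^{p-1}/p = C_n^p/n$ gives precisely the $1/n$ constant.

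The main obstacle is part (4), which needs a combinatorial swapping argument. The strategy is to build a map $\phi$ from all $p$-subsets to those containing $j$ that never increases $S_I$: set $\phi(I)=I$ if $j\in I$, and otherwise $\phi(I) = (I\setminus\{\min I\})\cup\{j\}$. The key observation is that when $j\ge n-p+1$, a set $I$ of size $p$ avoiding $j$ cannot be contained in $\{j{+}1,\dots,n\}$ (which has fewer than $p$ elements), so $\min I < j$, hence $\kappa_{\min I}\ge \kappa_j$ and $S_{\phi(I)}\le S_I$. Then I would estimate the multiplicity of $\phi$: for a target $I^*\ni j$, the preimages are obtained by replacing $j$ by some $m < \min(I^*\setminus\{j\})$, so the number of preimages is at most $\min(I^*\setminus\{j\})-1 \le n-p$ (because $|I^*\setminus\{j\}|=p-1$). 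Adding the $I^*$ itself gives multiplicity $\le n-p+1$, hence
\[
\sum_I \frac{1}{S_I} \;\le\; (n-p+1)\sum_{I\ni j}\frac{1}{S_I}.
\]
Multiplying by $F$ and recalling $\sum_i F^{ii} = p\sum_I F/S_I$ produces $F^{jj} \ge \theta \sum_i F^{ii}$ with $\theta = 1/(p(n-p+1))$, which depends only on $n$ and $p$ as required.
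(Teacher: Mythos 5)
Your proposal is correct, and parts (1)--(3) follow essentially the same lines as the paper's appendix: the bound $S_I\le p\lambda_1$ for $I\ni 1$ together with $C_{n-1}^{p-1}/(pC_n^p)=1/n$ for (1), the AM--GM inequality applied to the reciprocals $1/S_I$ for (2), and Euler's identity (which the paper verifies by the direct computation $\sum_k\sum_{I\ni k}\lambda_k/S_I=\sum_I 1=C_n^p$) for (3). The genuine difference is in part (4), where the paper's argument is much shorter than yours: since $j\ge n-p+1$, the index set $I_0=\{n-p+1,\dots,n\}$ contains $j$, so $F^{jj}\ge F/S_{I_0}$; and $S_{I_0}$ is the sum of the $p$ smallest eigenvalues, hence the minimum of all the $S_I$, so $F/S_{I_0}\ge \frac{1}{C_n^p}\sum_I F/S_I=\frac{1}{pC_n^p}\sum_i F^{ii}$, giving $\theta=1/(pC_n^p)$. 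Your swapping map $\phi$ is a valid alternative and in fact yields the sharper constant $\theta=1/(p(n-p+1))$; the only imprecision is the intermediate claim $\min(I^*\setminus\{j\})-1\le n-p$, which can fail when $j<\min(I^*\setminus\{j\})$ (a $(p-1)$-subset of $\{1,\dots,n\}$ can have minimum $n-p+2$) — there the exclusion of $m=j$ restores the count of at most $n-p$ proper preimages, and in any case a worse multiplicity would only change the value of $\theta(n,p)$, not the conclusion.
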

For the reader's convenience, we provide a short proof of the above lemma in the appendix.



\section{Curvature Estimates}

Set $u = \langle X, \nu \rangle$, which is the support function of the hypersurface $M$.
Clearly, we have
\[
u = \frac{\rho^2}{ \sqrt{\rho^2 + |\nabla \rho|^2} }.
\]
There exists a positive constant $C$ depending on $\inf_M \rho$ and $|\rho|_{C^1}$ such that
\[
\frac{1}{C} \leq \inf_M u \leq u \leq \sup_M u \leq C.
\]

In order to prove Theorem \ref{thm1}, we consider the following auxiliary function
\[
G =  \log \kappa_{max } - \log (u-a) + \frac{A}{2} |X|^2
\]
where $\kappa_{max}$ is the largest principal curvature, $a = \frac{1}{2} \inf_M u > 0$ and $A \geq 1$ is a large constant to be determined.
Suppose the maximum of $G$ is achieved at a point $X_0\in M$.
Choose a local orthonormal frame $\{e_1, \cdots, e_n\}$ around $X_0$ such that
\[
h_{ij} = \delta_{ij} h_{ii} \;\mbox{and}\; h_{11} \geq h_{22} \geq \cdots \geq h_{nn} \; \mbox{at}\;  X_0.
\]
Since $\kappa_{max}$ may not be differentiable, we define a new function $\hat G$ near $X_0$ by
\[
\hat G =  \log h_{11} - \log (u-a) + \frac{A}{2} |X|^2.
\]
It is easy to see $\hat G$ achieves a maximum at $X_0$.
Now, differentiating $\hat G$ at $X_0$ twice yields that
\begin{equation}
\label{diff1}
0 = \frac{ h_{11i}}{h_{11}} - \frac{u_i  }{u-a} + A \langle X, e_i\rangle
\end{equation}
and
\begin{equation}
\label{diff2}
0\geq \frac{h_{11ii}}{h_{11}} -\Big(\frac{h_{11i}}{h_{11}}\Big)^2 - \frac{ u_{ii}  }{u-a} + \Big(\frac{ u_i }{u-a}\Big)^2
+ A (1 + \langle X, X_{ii}\rangle).
\end{equation}
Contracting \eqref{diff2} with $\tF^{ii}$, we get
\begin{equation}
\label{S2-1}
0\geq \frac{\tF^{ii} h_{11ii}}{h_{11}} - \frac{\tF^{ii}h_{11i}^2}{h_{11}^2} - \frac{ \tF^{ii} u_{ii}  }{u-a} + \frac{ \tF^{ii}u_i^2 }{(u-a)^2}
+ A \tF^{ii} (1 + \langle X, X_{ii}\rangle).
\end{equation}

\begin{lemma}
\label{lem1}
We have
\begin{equation}
\label{L1}
\begin{aligned}
0 \geq &\; -\frac{2}{h_{11}} \sum_{i\geq 2} \tF^{1i,i1} h_{11i}^2 - \frac{\tF^{ii}h_{11i}^2}{h_{11}^2} - Ch_{11}\\
&\; + \frac{a \tF^{ii}h_{ii}^2 }{u-a} + \frac{\tF^{ii} u_i^2}{(u-a)^2} + A \sum \tF^{ii} - CA.
\end{aligned}
\end{equation}
\end{lemma}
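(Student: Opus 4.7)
The plan is to substitute explicit expressions for $\tF^{ii}h_{11ii}$, $\tF^{ii}u_{ii}$, and $A\tF^{ii}\langle X,X_{ii}\rangle$ into \eqref{S2-1} and identify a cancellation of the $\sum_i\tF^{ii}h_{ii}^2$ terms produced with opposite signs. First, the commutation formula \eqref{formula2} at the diagonalizing frame gives, for $i\neq 1$, $h_{11ii}=h_{ii11}-h_{ii}^2 h_{11}+h_{11}^2 h_{ii}$. Summing against $\tF^{ii}$ and using the identity $\sum_i \tF^{ii}h_{ii}=\tilde f$ (which comes from Lemma~\ref{Dinew1}(3) applied to $\tF=F^{1/C_n^p}$) yields
\[
\frac{\tF^{ii}h_{11ii}}{h_{11}} = \frac{\tF^{ii}h_{ii11}}{h_{11}} + h_{11}\tilde f - \sum_i \tF^{ii}h_{ii}^2.
\]

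For the $u$ piece, using $u=\langle X,\nu\rangle$ together with the Weingarten/Gauss formulas \eqref{formula1}, I compute $u_i=h_{ij}\langle X,e_j\rangle$ and $u_{ii}=h_{iik}\langle X,e_k\rangle+h_{ii}-h_{ii}^2 u$ at $X_0$. Contracting against $\tF^{ii}$, using the once-differentiated equation $\sum_i \tF^{ii}h_{iik}=d_k\tilde f$ together with $|d_k\tilde f|\leq C(1+h_{11})$ (a consequence of $|h_{kk}|\leq Ch_{11}$ by $p$-convexity), and splitting $u/(u-a)=1+a/(u-a)$, produces
\[
-\frac{\tF^{ii}u_{ii}}{u-a} \geq -Ch_{11} + \sum_i \tF^{ii}h_{ii}^2 + \frac{a}{u-a}\sum_i \tF^{ii}h_{ii}^2.
\]
The crucial cancellation is now visible: the $+\sum_i \tF^{ii}h_{ii}^2$ here exactly offsets the $-\sum_i \tF^{ii}h_{ii}^2$ from the commutation step, leaving the advertised $+\frac{a}{u-a}\tF^{ii}h_{ii}^2$. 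Meanwhile, Gauss \eqref{formula1} gives $\langle X,X_{ii}\rangle=-h_{ii}u$, so $A\tF^{ii}(1+\langle X,X_{ii}\rangle)=A\sum\tF^{ii}-Au\tilde f\geq A\sum\tF^{ii}-CA$.

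It remains to analyze $\sum_i \tF^{ii}h_{ii11}$ through the equation. Differentiating $\tF(h)=\tilde f$ twice along $e_1$ at $X_0$ yields $\sum_i\tF^{ii}h_{ii11}=d^2_{11}\tilde f - \tF^{pq,rs}h_{pq;1}h_{rs;1}$. The concavity of $\tF$ in $\{h_{ij}\}$ (from Lemma~1.13 and Corollary~1.14 of \cite{Dinew}) makes the diagonal part $\sum_{p,q}\tF^{pp,qq}h_{pp;1}h_{qq;1}\leq 0$, while the explicit formula for $F^{kr,rk}$ recalled in Section~2 gives $\tF^{pq,qp}\leq 0$ for $p\neq q$; discarding all nonnegative contributions except those with $p=1$ and invoking Codazzi $h_{1i;1}=h_{11i}$ produces precisely the $-\frac{2}{h_{11}}\sum_{i\geq 2}\tF^{1i,i1}h_{11i}^2$ term of the lemma. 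The main obstacle is controlling $d^2_{11}\tilde f$: a direct expansion using \eqref{formula1} gives $|d^2_{11}\tilde f|\leq C(1+h_{11}^2)+C\sum_j|h_{11j}|$, whose quadratic part divides to $-Ch_{11}$. The stray $h_{11j}$ terms are handled by the first-order condition \eqref{diff1}, which at the maximum gives $|h_{11j}|/h_{11}\leq C(h_{11}+A)$ (using $|u_j|\leq Ch_{11}$ at the diagonalizing frame), contributing further to the $-Ch_{11}-CA$ portion of the lemma. Collecting all terms, absorbing constants into $-Ch_{11}-CA$, and noting that $h_{11}\tilde f\geq 0$ can be dropped, yields the stated inequality.
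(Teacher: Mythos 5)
Your proposal is correct and follows the same overall strategy as the paper: the commutation identity \eqref{formula2}, the once- and twice-differentiated equation, the concavity of $\tF$ to reduce the second-derivative quadratic form to the $\tF^{1i,i1}$ terms, and the cancellation of $-\sum\tF^{ii}h_{ii}^2$ against $\frac{u}{u-a}\sum\tF^{ii}h_{ii}^2$ leaving $\frac{a}{u-a}\sum\tF^{ii}h_{ii}^2$. The one place you diverge is the treatment of the $d_\nu\tilde f$ third-order terms: the paper pairs $\frac{1}{h_{11}}\sum_k h_{k11}(d_\nu\tilde f)(e_k)$ with $-\frac{1}{u-a}\sum_k h_{kk}(d_\nu\tilde f)(e_k)\langle X,e_k\rangle$ and uses the critical equation \eqref{diff1} to cancel them exactly down to $-CA$, whereas you bound each separately via $|h_{kk}|\leq Ch_{11}$ (from $p$-convexity) and $|h_{11j}|/h_{11}\leq C(h_{11}+A)$, producing $-Ch_{11}-CA$; since the lemma's right-hand side already tolerates $-Ch_{11}-CA$ and this term is later absorbed by Lemma \ref{lem2}, your cruder estimate is equally sufficient here.
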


\begin{proof}

From the formula \eqref{formula2}, we have
\begin{equation}
\label{S2-3}
\begin{aligned}
\tF^{ii}h_{11ii} = \tF^{ii}h_{ii}h_{11}^2 - \tF^{ii}h_{ii}^2h_{11} + \tF^{ii}h_{ii11}.
\end{aligned}
\end{equation}
Differentiating equation \eqref{eqn} twice at $X_0$, we obtain
\begin{equation}
\label{deqn1}
\tilde F^{ii} h_{iik} = (d_X \tilde f) (e_k) + h_{kk} ( d_\nu \tilde f ) (e_k)
\end{equation}
and
\begin{equation}
 \label{equa2}
\begin{aligned}
\tilde F^{ii} h_{iikk} + \tilde F^{pq,rs} h_{pqk} h_{rsk}
\geq -C - C h_{11}^2 + \sum_l h_{lkk} (d_\nu \tilde f) (e_l).
  \end{aligned}
\end{equation}
By the concavity of $\tF$ and the Codazzi formula, we have
\begin{equation}
\label{Fconcave}
- \tilde F^{pq,rs} h_{pq1} h_{rs1} \geq - 2 \sum_{i\geq 2} \tF^{1i,i1} h_{11i}^2.
\end{equation}
Note that by Lemma \ref{Dinew1} (3) we have $\tF^{ii} h_{ii} = \tilde f$.
Hence, we see that
\begin{equation}
\label{Fh}
\tF^{ii} h_{11ii} \geq - 2 \sum_{i\geq 2} \tF^{1i,i1} h_{11i}^2 - \tF^{ii}h_{ii}^2h_{11} + \sum_l h_{l11} (d_\nu \tilde f) (e_l) -C - C h_{11}^2.
\end{equation}

We now compute the term $\tF^{ii}u_{ii}$. By \eqref{formula1}, we have
\[
u_{i} = h_{ii}\langle X, e_i\rangle \; \mbox{and}\; u_{ii} = \sum_k h_{iik} \langle X, e_k\rangle - u h_{ii}^2 + h_{ii}.
\]
Hence, we obtain
\begin{equation}
\label{Fu}
\begin{aligned}
\tF^{ii} u_{ii} = &\; \sum_k \tF^{ii}h_{iik} \langle X, e_k\rangle - u \tF^{ii}h_{ii}^2 + \tilde f\\
\leq &\; \sum_k h_{kk} (d_\nu \tilde f) (e_k)\langle X, e_k\rangle - u \tF^{ii}h_{ii}^2 + C.
\end{aligned}
\end{equation}

By the Gauss formula, we have
\begin{equation}
\label{FX}
\langle X, X_{ii}\rangle = -h_{ii} \langle X, \nu \rangle = -h_{ii} u.
\end{equation}
Substituting \eqref{Fh}, \eqref{Fu} and \eqref{FX} in \eqref{S2-1}, we obtain that
\begin{equation}
\label{S2-2}
\begin{aligned}
0 \geq &\,   - \frac{2}{h_{11}}  \sum_{i\geq 2} \tF^{1i,i1} h_{11i}^2 - \tF^{ii}h_{ii}^2
+ \frac{1}{h_{11}} \sum_l h_{l 11} (d_\nu \tilde f) (e_l) - C h_{11}\\
&\; - \frac{\tF^{ii}h_{11i}^2}{h_{11}^2} - \frac{ \sum_k h_{kk} (d_\nu \tilde f) (e_k)\langle X, e_k\rangle  }{u-a}
+ \frac{ u \tF^{ii}h_{ii}^2 }{u-a} - \frac{C}{u-a}\\
&\; + \frac{ \tF^{ii}u_i^2 }{(u-a)^2}
+ A \sum \tF^{ii} - CA.
\end{aligned}
\end{equation}
By the Codazzi formula, $u_k = h_{kk}\langle X, e_k\rangle$ and \eqref{diff1}, we have
\[
\frac{1}{h_{11}} \sum_k h_{k11} (d_\nu \tilde f) (e_k) - \frac{ h_{kk} (d_\nu \tilde f) (e_k)\langle X, e_k\rangle  }{u-a} \geq -CA.
\]
Therefore, we arrive at
\begin{equation}
\label{S2-4}
\begin{aligned}
0 \geq &\,   - \frac{2}{h_{11}}  \sum_{i\geq 2} \tF^{1i,i1} h_{11i}^2 - \tF^{ii}h_{ii}^2 - C h_{11} - \frac{C}{u-a}\\
&\; - \frac{\tF^{ii}h_{11i}^2}{h_{11}^2}
+ \frac{ u \tF^{ii}h_{ii}^2 }{u-a} + \frac{ \tF^{ii}u_i^2 }{(u-a)^2}
+ A \sum \tF^{ii} - CA,
\end{aligned}
\end{equation}
which is just the inequality \eqref{L1}.

\end{proof}

Next, we deal with the bad term $- C h_{11}$.
\begin{lemma}
\label{lem2}
Suppose $p\geq \frac{n}{2}$.
If $h_{11}$ is large enough, we have
\begin{equation}
\label{L2}
Ch_{11} \leq  \frac{ a \tF^{ii}h_{ii}^2 }{2 (u-a)} + \frac{A}{2} \sum \tF^{ii}
\end{equation}
for sufficiently large $A$.
\end{lemma}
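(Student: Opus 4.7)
\textbf{Proof plan for Lemma \ref{lem2}.} The plan is a case analysis based on the size of $-h_{nn}$ relative to $h_{11}$, with a small constant $\epsilon = \epsilon(n,p) > 0$ fixed below. In one case the first good term $\tfrac{a}{2(u-a)}\tF^{ii} h_{ii}^2$ absorbs $Ch_{11}$ by growing quadratically in $h_{11}$; in the other, the second good term $\tfrac{A}{2}\mathcal{F}$ does so once I establish $\mathcal{F} \geq c\, h_{11}$.

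\emph{Case A: $-h_{nn} \geq \epsilon h_{11}$.} Since $p \geq n/2$ forces $n \geq n-p+1$, Lemma \ref{Dinew1}~(4) together with (2) gives $\tF^{nn} \geq \theta \mathcal{F} \geq \theta p$. Dropping all but the $i=n$ summand,
\[
\tF^{ii} h_{ii}^2 \;\geq\; \tF^{nn} h_{nn}^2 \;\geq\; \theta p\, \epsilon^2\, h_{11}^2,
\]
which dominates $Ch_{11}$ once $h_{11}$ is large enough.

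\emph{Case B: $-h_{nn} < \epsilon h_{11}$.} This is the main obstacle. Here every $h_{ii} > -\epsilon h_{11}$, so for each $p$-subset $S \ni 1$,
\[
\sigma_S := \sum_{i\in S} h_{ii} \geq \bigl(1 - (p-1)\epsilon\bigr) h_{11} \geq \tfrac12 h_{11}
\]
after fixing $\epsilon < 1/\bigl(2(p-1)\bigr)$; hence $\prod_{S \ni 1}\sigma_S \geq (h_{11}/2)^{C_{n-1}^{p-1}}$. Since $F = \prod_S \sigma_S = \tilde f^{C_n^p}$ is bounded above, the complementary product satisfies $\prod_{S \not\ni 1}\sigma_S \leq C\, h_{11}^{-C_{n-1}^{p-1}}$. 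By AM--GM applied to the $C_{n-1}^p$ nonnegative numbers $\{\sigma_S : S \not\ni 1\}$, some $\sigma_{S_0}$ is bounded above by $C\, h_{11}^{-C_{n-1}^{p-1}/C_{n-1}^p} = C\, h_{11}^{-p/(n-p)}$; the hypothesis $p \geq n/2$ is used precisely here to guarantee $p/(n-p) \geq 1$, giving $\sigma_{S_0} \leq C/h_{11}$. Pick any $j \in S_0$ (necessarily $j \neq 1$). Then $F^{jj} \geq F/\sigma_{S_0} \geq c\, h_{11}$, and using $F$ bounded above, $\tF^{jj} \geq c\, h_{11}$; therefore $\mathcal{F} \geq \tF^{jj} \geq c\, h_{11}$, and $\tfrac{A}{2}\mathcal{F}$ absorbs $Ch_{11}$ once $A$ is chosen large.

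The heart of the argument is Case B. The mechanism is that boundedness of $F$, combined with $h_{11}$ being large, forces at least one $\sigma_S$ with $S \not\ni 1$ to be of order $1/h_{11}$, which inflates some $F^{jj}$ and hence $\mathcal{F}$ linearly in $h_{11}$. The exponent identity $C_{n-1}^{p-1}/C_{n-1}^p = p/(n-p)$ pinpoints the threshold $p \geq n/2$: it is precisely the condition ensuring that the power of $h_{11}$ extracted from the AM--GM step is at least one, and the argument would fail otherwise.
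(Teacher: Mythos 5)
Your proof is correct, and its overall skeleton matches the paper's: both split on whether $-h_{nn}$ is comparable to $h_{11}$, and in that case both use Lemma \ref{Dinew1} (2) and (4) to make the quadratic term $\frac{a}{2(u-a)}\tF^{ii}h_{ii}^2$ dominate. Where you diverge is in the main case ($-h_{nn}<\epsilon h_{11}$). The paper splits this further into two subcases according to whether $\kappa_{n-p+1}+\cdots+\kappa_n$ is smaller or larger than $\delta/\kappa_1$, and in the harder subcase lower-bounds $F^{nn}$ directly by the product of all factors $\sigma_S$ except the smallest, using $\sigma_S\geq\delta/\kappa_1$ for $S\not\ni 1$ and $\sigma_S\gtrsim\kappa_1$ for $S\ni 1$. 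You instead run a pigeonhole argument: boundedness of $F$ plus $\prod_{S\ni 1}\sigma_S\gtrsim h_{11}^{C_{n-1}^{p-1}}$ forces the geometric mean of $\{\sigma_S: S\not\ni 1\}$ to be at most $Ch_{11}^{-p/(n-p)}\leq C/h_{11}$, whence some $F^{jj}\geq F/\sigma_{S_0}\gtrsim h_{11}$. This removes the need for the paper's sub-split entirely (you never need a lower bound on the small factors, only their positivity), and it isolates the role of $p\geq n/2$ in the single exponent identity $C_{n-1}^{p-1}/C_{n-1}^p=p/(n-p)\geq 1$ — the same combinatorial threshold the paper uses in the form $C_{n-1}^{p-1}\geq C_{n-1}^p$. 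Your version is arguably cleaner; the paper's version has the mild advantage of producing the bound for the specific entry $F^{nn}$. Two small points worth a sentence in a final write-up: when $p=n$ the set $\{S:\,S\not\ni 1\}$ is empty and the AM--GM step is vacuous, but then $F\geq h_{11}/2$ with $F$ bounded already contradicts $h_{11}$ large, so the case is trivial; and in passing from $F^{jj}\gtrsim h_{11}$ to $\tF^{jj}\gtrsim h_{11}$ you should note $F^{1/C_n^p-1}\geq(\sup f)^{1/C_n^p-1}>0$, as you implicitly do.
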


\begin{proof}
Note that
\[
\kappa_{n-p+1} + \kappa_{n-p+2} + \cdots + \kappa_n > 0.
\]
We divide the proof into two cases.

Case 1. Suppose $\kappa_n \leq - \delta \kappa_1$,
where $\delta > 0$ is a small constant to be determined later.
 By Lemma \ref{Dinew1} we see $\tF^{nn} \geq \theta \sum \tF^{ii} \geq \theta p$.
We then obtain that
\[
\tF^{nn} h_{nn}^2 \geq \delta^2 \kappa_1^2 \tF^{nn} \geq \theta p \delta^2 \kappa_1^2.
\]
Therefore,
for sufficiently large $\kappa_1$, we have
\[
Ch_{11} \leq \frac{ a \tF^{nn}h_{nn}^2 }{2 (u-a)}.
\]

Case 2. Now $\kappa_n \geq -\delta \kappa_1$. We further divide this case into two cases.

Subcase 2.1.
Suppose $\kappa_{n-p+1} + \kappa_{n-p+2} + \cdots + \kappa_n < \frac{\delta}{\kappa_1}$.
Since
\[
F^{nn} \geq \frac{F(\kappa)}{\kappa_{n-p+1} + \kappa_{n-p+2} + \cdots + \kappa_n},
\]
we see that
\[
\tF^{nn} = \frac{1}{C_n^p} F^{\frac{1}{C_n^p} - 1} F^{nn} \geq \frac{F^{ \frac{1}{C_n^p} } }{C_n^p}  \frac{\kappa_1}{\delta}
= \frac{ \tilde f }{C_n^p}  \frac{\kappa_1}{\delta}.
\]
Choosing $\delta$ sufficiently small, we obtain that
\[
C \kappa_1 \leq \tF^{nn}.
\]

Subcase 2.2. Suppose $\kappa_{n-p+1} + \kappa_{n-p+2} + \cdots + \kappa_n \geq \frac{\delta}{\kappa_1}$.
For a fixed $(p-1)$-tuple $2\leq i_1 < \cdots < i_{p-1} \leq n$, we have
\[
\kappa_1 + \kappa_{i_1} + \cdots + \kappa_{i_{p-1}} \geq (1-(p-1)\delta ) \kappa_1.
\]
Hence, we have
\[\begin{aligned}
F^{nn} \geq &\; \mathop{\Pi}\limits_{2\leq i_1 < \cdots < i_{p-1} \leq n } ( \kappa_1 + \kappa_{i_1} + \cdots + \kappa_{i_{p-1}} ) \\
& \times \mathop{\Pi}\limits_{ \substack{ 2\leq i_1 < \cdots < i_{p} \leq n \\ (i_1, \cdots, i_p) \neq (n-p+1, \cdots, n) } }
( \kappa_{i_1} + \kappa_{i_2} + \cdots + \kappa_{i_{p}} )\\
\geq &\; [ (1-(p-1)\delta ) \kappa_1 ]^{C_{n-1}^{p-1}} [\frac{\delta}{\kappa_1}]^{C_{n-1}^p - 1}.
\end{aligned}\]
For $p\geq \frac{n}{2}$, a direct calculation shows that
\[
C_{n-1}^{p-1} - C_{n-1}^{p} = \frac{(n-1)\cdots(n-p+1)}{(p-1)!} \Big( 1 - \frac{n-p}{p} \Big) \geq 0.
\]
Therefore, we obtain
\[
F^{nn} \geq c_{\delta} \kappa_1,
\]
where $c_\delta = [ (1-(p-1)\delta ) ]^{C_{n-1}^{p-1}} \delta^{C_{n-1}^p - 1}$.
It then follows that, for sufficiently large $A$,
\[
C\kappa_1 \leq \frac{A}{2} \tF^{nn}.
\]

\end{proof}

By the above Lemma, \eqref{L1} becomes
\begin{equation}
\label{S2-5}
\begin{aligned}
0 \geq &\,   - \frac{2}{h_{11}}  \sum_{i\geq 2} \tF^{1i,i1} h_{11i}^2 - \frac{\tF^{ii}h_{11i}^2}{h_{11}^2} + \frac{ a \tF^{ii}h_{ii}^2 }{2(u-a)} \\
&\; + \frac{ \tF^{ii}u_i^2 }{(u-a)^2}
+ \frac{A}{2} \sum \tF^{ii} - CA.
\end{aligned}
\end{equation}

\begin{lemma}
\label{lem3}
For $\kappa_1$ sufficiently large, we have
\[
|\kappa_{n-p+1}|, \cdots, |\kappa_n| \leq C A.
\]
\end{lemma}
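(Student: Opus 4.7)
The plan is to convert \eqref{S2-5} into a direct quadratic bound on the small eigenvalues $\kappa_{n-p+1},\ldots,\kappa_n$, by using the first-order critical equation \eqref{diff1} to dispose of the bad third-order term and Lemma \ref{Dinew1}(4) to extract definite positivity from those indices. As a preliminary observation, the sum $-\frac{2}{h_{11}}\sum_{i\geq 2}\tF^{1i,i1}h_{11i}^2$ in \eqref{S2-5} is already nonnegative, because $\tF^{1i,i1}=(\tF^{11}-\tF^{ii})/(\kappa_1-\kappa_i)\leq 0$ by the concavity of $\tF$, so it may simply be discarded.

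The key step is to substitute $h_{11i}/h_{11}=u_i/(u-a)-A\langle X,e_i\rangle$ from \eqref{diff1} into the remaining third-order term and expand the square. This produces the identity
\[
-\frac{\tF^{ii}h_{11i}^2}{h_{11}^2}+\frac{\tF^{ii}u_i^2}{(u-a)^2}=\frac{2A\tF^{ii}\langle X,e_i\rangle u_i}{u-a}-A^2\tF^{ii}\langle X,e_i\rangle^2,
\]
so that the bad third-order term cancels exactly against the positive $\tF^{ii}u_i^2/(u-a)^2$ piece in \eqref{S2-5}, leaving only cross terms linear or quadratic in $A$. The linear cross term I would treat by substituting $u_i=h_{ii}\langle X,e_i\rangle$ and applying AM-GM,
\[
\frac{2A\tF^{ii}h_{ii}\langle X,e_i\rangle^2}{u-a}\geq -\frac{a\tF^{ii}h_{ii}^2}{4(u-a)}-CA^2\tF^{ii},
\]
which is absorbed into a quarter of the already-present good term $a\tF^{ii}h_{ii}^2/(2(u-a))$. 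After merging the remaining $-CA$ and $\tfrac{A}{2}\sum\tF^{ii}$ into the $CA^2\sum\tF^{ii}$ error (legitimate since $\sum\tF^{ii}\geq p$ by Lemma \ref{Dinew1}(2)), \eqref{S2-5} collapses to
\[
\frac{a}{4(u-a)}\sum_i\tF^{ii}h_{ii}^2\leq CA^2\sum_i\tF^{ii}.
\]

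To conclude, Lemma \ref{Dinew1}(4) gives $\tF^{jj}\geq\theta\sum_i\tF^{ii}$ for every $j\geq n-p+1$ (the ratio being unchanged by the rescaling $\tF^{ii}=\tfrac{1}{C_n^p}F^{1/C_n^p-1}F^{ii}$), so
\[
\sum_i\tF^{ii}h_{ii}^2\geq\sum_{j=n-p+1}^n\tF^{jj}\kappa_j^2\geq\theta\Big(\sum_i\tF^{ii}\Big)\sum_{j=n-p+1}^n\kappa_j^2.
\]
Dividing by $\sum_i\tF^{ii}>0$ yields $\sum_{j\geq n-p+1}\kappa_j^2\leq CA^2$, whence $|\kappa_j|\leq CA$ for every such $j$. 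I expect the only delicate point to be spotting the exact cancellation in the first displayed identity; after that the rest is AM-GM bookkeeping, and the constants work out because $u-a\geq a>0$.
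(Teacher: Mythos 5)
Your proof is correct and follows essentially the same route as the paper: both arguments discard the nonnegative term $-\tfrac{2}{h_{11}}\sum_{i\ge 2}\tF^{1i,i1}h_{11i}^2$, use the critical equation \eqref{diff1} together with $u_i=h_{ii}\langle X,e_i\rangle$ to absorb the third-order term into the good term $\tfrac{a\tF^{ii}h_{ii}^2}{2(u-a)}$ at the cost of $CA^2\sum\tF^{ii}$, and conclude via Lemma \ref{Dinew1}(4). The only (cosmetic) difference is that you expand the square from \eqref{diff1} exactly and treat the cross term by AM--GM, whereas the paper applies a weighted Cauchy--Schwarz inequality with a small parameter $\varepsilon$.
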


\begin{proof}
By the critical equation \eqref{diff1} and the Cauchy-Schwarz inequality, we have
\begin{equation}
\label{CS}
- \frac{\tF^{ii}h_{11i}^2}{h_{11}^2} \geq - (1+ \varepsilon ) \frac{ \tF^{ii}u_i^2 }{(u-a)^2}
- \Big( 1 + \frac{1}{\varepsilon}\Big) A^2 \tF^{ii} \langle X, e_i\rangle^2.
\end{equation}
From \eqref{S2-5} and $- \tF^{1i,i1} \geq 0$, we see that
\begin{equation}
\label{S2-6}
\begin{aligned}
0 \geq &\;   \frac{ a \tF^{ii}h_{ii}^2 }{2(u-a)} - \frac{\varepsilon \tF^{ii}u_i^2 }{(u-a)^2} - \frac{CA^2}{\varepsilon} \sum \tF^{ii} - CA.
\end{aligned}
\end{equation}
Using $u_i = h_{ii}\langle X, e_i\rangle$ and choosing $\varepsilon$ sufficiently small, we obtain from \eqref{S2-6} that
\begin{equation}
\label{S2-7}
\begin{aligned}
0 \geq &\;   \frac{ a \tF^{ii}h_{ii}^2 }{4(u-a)} - \frac{CA^2}{\varepsilon} \sum \tF^{ii},
\end{aligned}
\end{equation}
where we also used $\sum \tF^{ii} \geq p$. By Lemma \ref{Dinew1},
we now arrive at
\begin{equation}
\label{S2-8}
\begin{aligned}
0 \geq &\;   \frac{ a \theta  }{4(u-a)} \Big( \sum \tF^{ii} \Big) \Big( \sum_{i\geq n-p+1} h_{ii}^2 \Big) - \frac{CA^2}{\varepsilon} \sum \tF^{ii},
\end{aligned}
\end{equation}
which implies that
\[
\sum_{i\geq n-p+1} h_{ii}^2 \leq CA^2.
\]

\end{proof}


\begin{lemma}
\label{lem4}
Given $1> \delta > 0$, there is an $\epsilon = \epsilon (p, \delta) > 0$
such that,
\[
-2 F^{1i,i1} + 2 \frac{F^{11}}{\kappa_1} \geq (1+ \delta) \frac{F^{ii}}{\kappa_1}, \; i= 2,3, \cdots n,
\]
for $\kappa_1$ sufficiently large.
\end{lemma}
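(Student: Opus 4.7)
My plan is first to combine $-2F^{1i,i1}$ with $2F^{11}/\kappa_1$ into a single rational expression using the formula $F^{1i,i1} = (F^{11}-F^{ii})/(\kappa_1-\kappa_i)$ recorded in Section~2. A short algebraic manipulation yields
\[
-2F^{1i,i1} + \frac{2F^{11}}{\kappa_1} - (1+\delta)\frac{F^{ii}}{\kappa_1}
= \frac{\bigl[(1-\delta)\kappa_1 + (1+\delta)\kappa_i\bigr] F^{ii} - 2\kappa_i F^{11}}{\kappa_1(\kappa_1-\kappa_i)},
\]
so since $\kappa_1(\kappa_1-\kappa_i)>0$, proving the lemma reduces to establishing the inequality
\[
\bigl[(1-\delta)\kappa_1 + (1+\delta)\kappa_i\bigr] F^{ii} \geq 2\kappa_i F^{11}. \qquad (\star)
\]

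I would then split on the sign of $\kappa_i$. For $\kappa_i \geq 0$, the explicit expansion $F^{kk} = \sum_{I \ni k} F/\kappa_I$ combined with $\kappa_1 \geq \kappa_i$ shows term by term that $F^{11} \leq F^{ii}$ (pairing each $I=\{1\}\cup S$ with $I'=\{i\}\cup S$), so $(\star)$ reduces to $(1-\delta)\kappa_1 + (1+\delta)\kappa_i \geq 2\kappa_i$, i.e.\ $\kappa_1 \geq \kappa_i$, which is immediate.

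The delicate case is $\kappa_i<0$. Here $p$-convexity forces $\kappa_{n-p+1}\geq 0$ (otherwise $\kappa_{n-p+1}+\cdots+\kappa_n$ would be negative), so we must have $i\in\{n-p+2,\ldots,n\}$. For every $(p-1)$-subset $S\subset\{2,\ldots,n\}\setminus\{i\}$, $p$-convexity applied to $\{i\}\cup S$ gives $\kappa_S\geq |\kappa_i|$, hence $\kappa_1+\kappa_S\geq \kappa_1+|\kappa_i|$. Plugging this into $-F^{1i,i1} = \sum_S F/[(\kappa_1+\kappa_S)(\kappa_i+\kappa_S)]$ yields
\[
2|\kappa_i|\,(-F^{1i,i1}) \leq \frac{2|\kappa_i|}{\kappa_1+|\kappa_i|}\sum_S \frac{F}{\kappa_i+\kappa_S} \leq \frac{2|\kappa_i|}{\kappa_1+|\kappa_i|}\,F^{ii},
\]
and $(\star)$ — equivalently $(1-\delta)\kappa_1 F^{ii} + 2|\kappa_i| F^{11} \geq (1+\delta)|\kappa_i| F^{ii}$ — will then follow provided $|\kappa_i|/\kappa_1 \leq (1-\delta)/(1+\delta)$.

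The hard part will be justifying this last smallness bound. My plan is to set $\epsilon=\epsilon(p,\delta):=(1-\delta)/(1+\delta)$ and appeal to Lemma~\ref{lem2}: its Case~1 already handles the regime $\kappa_n \leq -\epsilon\kappa_1$ through a different mechanism (directly dominating the bad term $Ch_{11}$ by $\tF^{nn}h_{nn}^2/(u-a)$), so the present lemma is invoked only in the complementary regime $\kappa_n > -\epsilon\kappa_1$. In that regime every negative eigenvalue $\kappa_i$ automatically satisfies $|\kappa_i|\leq |\kappa_n|\leq \epsilon\kappa_1$, closing the argument.
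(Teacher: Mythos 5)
Your algebraic core is correct and is essentially the paper's own argument repackaged: clearing denominators in $F^{1i,i1}=(F^{11}-F^{ii})/(\kappa_1-\kappa_i)$ and splitting on the sign of $\kappa_i$ is exactly what the paper does (it writes $\frac{F^{ii}}{\kappa_1}=\frac{\kappa_1-\kappa_i}{\kappa_1}(-F^{1i,i1})+\frac{F^{11}}{\kappa_1}$ and observes that the coefficient is $\leq 1$ when $\kappa_i\geq 0$ and $\leq 1+\epsilon$ when $|\kappa_i|\leq\epsilon\kappa_1$). Your threshold $\epsilon=(1-\delta)/(1+\delta)$ coincides with the constraint $(1+\delta)(1+\epsilon)\leq 2$ that the paper's version implicitly requires, and your treatment of the case $\kappa_i\geq 0$ is fine (the detour through an upper bound on $-F^{1i,i1}$ in the negative case is never actually used and can be deleted).

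The gap is in the last step, where you justify $|\kappa_i|\leq\epsilon\kappa_1$ for the negative eigenvalues. Some such input is genuinely indispensable: the inequality is false as a statement about the cone $\mathcal{P}_p$ alone. For $n=4$, $p=2$, $\kappa=(t,t,t,-t/2)$ one has $F^{11}=3F/t$, $F^{44}=6F/t$, and the numerator of your rational expression for $i=4$ equals $3F(2-3\delta)$, which is negative for every $\delta>2/3$ no matter how large $t$ is. But your proposed source for the smallness bound, namely Case~1 of Lemma~\ref{lem2}, does not deliver it: all three cases of Lemma~\ref{lem2} end with the identical conclusion \eqref{L2}, after which the main proof proceeds uniformly from \eqref{S2-5} with no surviving case distinction at the point where Lemma~\ref{lem4} is invoked; moreover Case~1 only dominates the zeroth-order bad term $Ch_{11}$ and says nothing about the third-order terms $-\tF^{ii}h_{11i}^2/h_{11}^2$ that Lemma~\ref{lem4} is designed to absorb, so it cannot by itself close the estimate in the regime $\kappa_n\leq-\epsilon\kappa_1$. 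The correct source is Lemma~\ref{lem3}: at the maximum point it gives $|\kappa_i|\leq CA$ for all $i\geq n-p+1$, a bound independent of $\kappa_1$, and since every negative eigenvalue has index $\geq n-p+2$ (as you note), this yields $|\kappa_i|\leq\epsilon\kappa_1$ once $\kappa_1\geq CA/\epsilon$. Replacing the appeal to Lemma~\ref{lem2} by a citation of Lemma~\ref{lem3} closes the gap and recovers the paper's proof.
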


\begin{proof}
Recall that $\kappa_1 \geq \kappa_2 \geq \cdots \geq \kappa_n$.
By the formula $F^{1i,i1} = \frac{F^{11} -F^{ii}}{\kappa_1 -\kappa_i}$, we see that
\[
\frac{F^{ii}}{\kappa_1} = \frac{\kappa_1 -\kappa_i}{\kappa_1}(- F^{1i,i1}) + \frac{F^{11}}{\kappa_1}.
\]
Since $\kappa_{i} > 0$ for $ i \leq n-p+1$, we obtain
\[
\frac{F^{ii}}{\kappa_1} \leq - F^{1i,i1} + \frac{F^{11}}{\kappa_1}, \; \mbox{for}\; i = 2, \cdots, n-p+1.
\]
By Lemma \ref{lem3}, we can assume that $|\kappa_i| \leq \epsilon \kappa_1$ for $i \geq n-p+2$
for sufficiently small $\epsilon$ and large $\kappa_1$.
Hence, we have
\[
\frac{F^{ii}}{\kappa_1} \leq - (1+\epsilon) F^{1i,i1} + \frac{F^{11}}{\kappa_1}, \; \mbox{for}\; i = n-p+2, \cdots, n.
\]
By the above two inequalities, we get the desired inequality.
\end{proof}

By Lemma \ref{lem4}, \eqref{S2-5} becomes

\begin{equation}
\label{S2-5'}
\begin{aligned}
0 \geq &\,  \sum_{i\geq 2}\frac{\tF^{ii}h_{11i}^2}{h_{11}^2} - \frac{\tF^{ii}h_{11i}^2}{h_{11}^2}
- 2 \sum_{i\geq 2} \frac{\tF^{11}h_{11i}^2}{h_{11}^2}
+ \frac{ a \tF^{ii}h_{ii}^2 }{2(u-a)} \\
&\; + \frac{ \tF^{ii}u_i^2 }{(u-a)^2}
+ \frac{A}{2} \sum \tF^{ii} - CA.
\end{aligned}
\end{equation}
By the critical equation \eqref{diff1}, the Cauchy-Schwarz inequality and $u_i = h_{ii}\langle X, e_i\rangle$, we see that
\begin{equation}
\label{h111}
\begin{aligned}
- \frac{\tF^{11}h_{111}^2}{h_{11}^2} \geq &\; - (1+ \varepsilon ) \frac{ \tF^{11}u_1^2 }{(u-a)^2}
- \Big( 1 + \frac{1}{\varepsilon}\Big) A^2 \tF^{11} \langle X, e_1\rangle^2\\
\geq &\; - \frac{ \tF^{11}u_1^2 }{(u-a)^2} - C\varepsilon \frac{ \tF^{11}h_{11}^2 }{(u-a)^2} - \frac{CA^2 }{\varepsilon} \tF^{11}
\end{aligned}
\end{equation}
and
\[
2 \sum_{i\geq 2} \frac{\tF^{11}h_{11i}^2}{h_{11}^2} \leq C \sum_{i\geq 2} \tF^{11} h_{ii}^2 + CA^2 F^{11}.
\]
Note that
\[
F^{11} = \sum_{1\notin \{i_1, \cdots, i_{p-1}\}} \frac{F(\kappa)}{\kappa_1 + \kappa_{i_1} + \cdots + \kappa_{i_{p-1}}}
\leq \frac{C}{\kappa_1}.
\]
Hence, by Lemma \ref{lem2}, we obtain
\begin{equation}
\label{h11i}
2 \sum_{i\geq 2} \frac{\tF^{11}h_{11i}^2}{h_{11}^2} \leq \frac{ a \tF^{ii}h_{ii}^2 }{4 (u-a)} + \frac{A}{4} \sum \tF^{ii} + CA^2 \tF^{11}.
\end{equation}
Substituting \eqref{h111} and \eqref{h11i} into \eqref{S2-5'},
we have
\begin{equation}
\label{S2-5''}
\begin{aligned}
0 \geq &\,  \frac{ a \tF^{ii}h_{ii}^2 }{4(u-a)} - \frac{ \tF^{11}u_1^2 }{(u-a)^2}
- C\varepsilon \frac{ \tF^{11}h_{11}^2 }{(u-a)^2} - \frac{CA^2 }{\varepsilon} \tF^{11}\\
 + &\; \frac{ \tF^{ii}u_i^2 }{(u-a)^2}
+ \frac{A}{4} \sum \tF^{ii} - CA^2 \tF^{11} - CA.
\end{aligned}
\end{equation}
Choosing $\varepsilon$ sufficiently small and assuming $h_{11}$ sufficiently large, we derive that
\begin{equation}
\label{S2-9}
\begin{aligned}
0 \geq &\,  \frac{ a \tF^{ii}h_{ii}^2 }{8(u-a)}
+ \frac{A}{4} \sum \tF^{ii} - CA.
\end{aligned}
\end{equation}
It then follows that
\[
\sum \tF^{ii} \leq C.
\]
Next we prove that under this condition, one have
\[
\tF^{11} \geq \frac{1}{C}.
\]

Since $\sum F^{ii} \leq C$, in particular we have
\[
\frac{F(\kappa)}{\kappa_{n-p+1} + \cdots + \kappa_{n}} \leq C.
\]
This implies that
\[
\kappa_{n-p+1} + \cdots + \kappa_{n} \geq \frac{1}{C},
\]
where $C$ also depends on $\inf f$.
This yields
\[
\tF^{ii} \geq \frac{1}{C}, \; \forall \; 1\leq i \leq n.
\]
Substituting the above inequality into \eqref{S2-9}, we obtain
\[
0\geq \frac{h_{11}^2}{C} - CA,
\]
from which we can derive an upper bound for $h_{11}$.
Theorem \ref{thm1} is proved.

\section{Gradient Estimates}

Before we apply the continuity method to obtain a solution to equation \eqref{eqn}, we need to derive a $C^1$ estimate for the equation.
We show that there exists a positive constant $C$ depending on $n$, $p$, $\inf \rho$, $\sup \rho$, $\inf f$ and $|f|_{C^1}$ such that
\[
|\nabla \rho| \leq C,
\]
where $\nabla$ denotes the connection on $\mathbb{S}^n$.
Note that
\[
u = \frac{\rho^2}{ \sqrt{\rho^2 + |\nabla \rho|^2 } }.
\]
We only need to derive a positive lower bound of $u$. As in \cite{GLM}
we consider the following quantity
\[
w = -\log u + \gamma (|X|^2),
\]
where the function $\gamma (\cdot)$ will be determined later.
Suppose the maximum of $w$ is achieved at $X_0 \in M$. If at $X_0$, $ X$ is parallel to $\nu$, we have
\[
u = \langle X, \nu \rangle = \rho \geq \inf_M \rho,
\]
which gives a lower bound since $\rho$ is assumed to have a positive lower bound.
If at $X_0$ , $X$ is not parallel to $\nu$,
we can choose a local orthonormal frame $\{e_1, e_2, \cdots, e_n\}$ such that
\[
\langle X, e_1 \rangle \neq 0 \; \mbox{and}\; \langle X, e_i \rangle = 0 \; \mbox{for}\; i \geq 2.
\]
Differentiate $w$ at $X_0$ to obtain that
\begin{equation}
\label{w-diff1}
0 = w_i = - \frac{u_i}{u} + 2 \gamma' \langle X, e_i \rangle = - \frac{h_{i1} \langle X, e_1 \rangle}{u} + 2 \gamma' \langle X, e_i \rangle,
\end{equation}
where in the last equality we used the Weingarten equation. Hence, we have
\[
h_{11} = 2 \gamma' u \; \mbox{and} \; h_{1i} = 0 \; \mbox{for} \; i \geq 2.
\]
Without loss of generality, we can assume $\{h_{ij}\}$ is diagonal at $X_0$.
Differentiating $w$ at $X_0$ a second time and contracting with $\{\tF^{ij}\}$, we obtain that
\begin{equation}
\label{w-diff2}
0 \geq \tF^{ii} \Big(- \frac{u_{ii}}{u} + \frac{u_i^2}{u^2} + \gamma'' (|X|^2)_i^2 + \gamma' (|X|^2)_{ii} \Big).
\end{equation}
Combining \eqref{w-diff1} with the above inequality, we arrive at
\begin{equation}
\label{S1-1}
0 \geq  - \frac{\tF^{ii} u_{ii}}{u} + 4(\gamma'^2 + \gamma'') F^{11} \langle X, e_1\rangle^2 + \gamma'\tF^{ii} (|X|^2)_{ii}.
\end{equation}

By \eqref{Fu}, we have
\begin{equation}
\begin{aligned}
\tF^{ii} u_{ii} = \langle X, e_1\rangle ((d_X \tilde f) (e_1) + h_{11} (d_\nu \tilde f) (e_1) ) - u \tF^{ii}h_{ii}^2 + \tilde f.
\end{aligned}
\end{equation}
Also, we have
\[
\tF^{ii} (|X|^2)_{ii} = 2 \sum \tF^{ii} - 2 u \tilde f,
\]
where we used $\tF^{ij} h_{ij} = \tilde f$.
Recall that $h_{11} = 2 \gamma' u$.
Substituting the above two equalities into \eqref{S1-1} we get
\begin{equation}
\label{S1-2}
\begin{aligned}
0 \geq &\; - \frac{1}{u} \Big( \langle X, e_1\rangle (d_X \tilde f) (e_1) + \tilde f \Big)
- 2\langle X, e_1\rangle \gamma' (d_\nu \tilde f) (e_1)  \\
&\; + \tF^{ii}h_{ii}^2 + 4(\gamma'^2 + \gamma'') F^{11} \langle X, e_1\rangle^2 + 2 \gamma' ( \sum \tF^{ii} - u \tilde f ).
\end{aligned}
\end{equation}
At $X_0$, we see that $X = \langle X, e_1 \rangle e_1 + \langle X, \nu \rangle \nu$. It then follows that
\[
(d_X \tilde f) (X) = \langle X, e_1 \rangle (d_X \tilde f) (e_1) +  \langle X, \nu \rangle (d_X \tilde f) (\nu).
\]
From \eqref{f2}, we see that
\[\begin{aligned}
0\geq &\; \frac{\partial }{\partial \rho} \Big( \rho^{C_n^p} f(X, \nu) \Big)
= \frac{\partial }{\partial \rho} \Big( \rho^{C_n^p} \tilde f^{C_n^p} (X, \nu) \Big)\\
= &\; {C_n^p}(\rho \tilde f)^{{C_n^p}-1} \Big( \tilde f + (d_X \tilde f)(X) \Big)\\
= &\; {C_n^p}(\rho \tilde f)^{{C_n^p}-1} \Big( \tilde f +
\langle X, e_1 \rangle (d_X \tilde f) (e_1) +  \langle X, \nu \rangle (d_X \tilde f) (\nu) \Big).
\end{aligned}\]
We therefore obtain
\[
- ( \tilde f + \langle X, e_1 \rangle (d_X \tilde f) (e_1) ) \geq \langle X, \nu \rangle (d_X \tilde f) (\nu) = u (d_X \tilde f) (\nu).
\]
Substituting this into \eqref{S1-2} we obtain
\begin{equation}
\label{S1-3}
\begin{aligned}
0 \geq &\; (d_X \tilde f) (\nu)
- 2\langle X, e_1\rangle \gamma' (d_\nu \tilde f) (e_1) + \tF^{ii}h_{ii}^2 \\
 &\; + 4(\gamma'^2 + \gamma'') F^{11} \langle X, e_1\rangle^2 + 2 \gamma' ( \sum \tF^{ii} - u \tilde f ).
\end{aligned}
\end{equation}

Now we choose $\gamma (t) = \frac{\alpha}{t}$, where $\alpha$ is a large constant to be determined later. Recall that
$h_{11} = 2\gamma' u$ at $X_0$, which implies that $h_{11} (X_0) < 0$.
This means that $h_{11} \in \{\kappa_{n-p+2}, \kappa_{n-p+3}, \cdots, \kappa_n \}$ and therefore by Lemma \ref{Dinew1}
\[
F^{11} \geq \theta \sum F^{ii}.
\]
Similar to \cite{CJ}, we can assume $\langle X, e_1\rangle^2 \geq \frac{1}{2} \inf_M \rho^2$.
Now we arrive at
\begin{equation}
\label{S1-4}
\begin{aligned}
0 \geq &\; \Big(\frac{\alpha^2}{C } - C\alpha \Big)\sum \tF^{ii} - C\alpha.
\end{aligned}
\end{equation}
Choosing $\alpha$ sufficiently large, we obtain a contradiction.
Therefore, $X_0$ is parallel to $\nu$, and $u$ has a positive lower bound.

\section{Existence of a solution}

We use the continuity method as in \cite{CNS4} to prove Theorem \ref{thm2}.
Consider the following family of functions
\[
f^t(X,\nu) = t f(X,\nu) + (1-t) p^{C_n^p} [ \frac{1}{|X|^{C_n^p}} + \varepsilon (\frac{1}{|X|^{C_n^p}} - 1) ],
\]
where $\varepsilon$ is a small positive constant such that
\[
\min_{r_1\leq \rho\leq r_2} [ \frac{1}{\rho^{C_n^p}} + \varepsilon (\frac{1}{\rho^{C_n^p}} - 1) ] \geq c_0 > 0,
\]
for some positive constant $c_0$.
It is easy to see that$f^t (X,\nu)$ satisfies \eqref{f1} and \eqref{f2} with strict inequalities for $0\leq t < 1$.

Let $M_t$ be the solution of the equation
\[
F (\kappa) = f^t (X_t, \nu_t),
\]
where $X_t$ and $\nu_t$ are position vector and unit outer normal of $M_t$ respectively.
Clearly, when $t=0$, we have $M_0 = \mathbb{S}^n$ and $X_0 = x$. For $t\in (0,1)$, suppose
$\rho_t = |X_t|$ attains its maximum at the point $x_0$. At this point, by \eqref{secondForm}, we have
\[
g_{ij} = \rho_t^2 \delta_{ij}\; \mbox{and}\; h_{ij} = - (\rho_t)_{ij} + \rho_t \delta_{ij} \geq \rho_t \delta_{ij}
\]
under a smooth local orthonormal frame on $\mathbb{S}^n$.
Then, we have
\[
F (\kappa) \geq F (\frac{1}{\rho_t} (1, \cdots, 1) ) = \frac{p^{C_n^p}}{\rho_t^{C_n^p}}.
\]
On the other hand, at $x_0$, the unit outer normal $\nu_t$ is parallel to $X_t$.
If $\rho_t (x_0) = r_2$, we obtain
\[
\frac{p^{C_n^p}}{{r_2}^{C_n^p}} \leq F (\kappa) = f^t (X_t, \nu_t) < \frac{p^{C_n^p}}{{r_2}^{C_n^p}},
\]
which is a contradiction. So we have $\sup_{M_t} \rho_t \leq r_2$. Similarly argument at the minimum point of $\rho_t$ gives
that $\inf_{M_t} \rho_t \geq r_1$ on $M_t$. Hence, $C^0$ estimate follows. Combining our $C^1$ estimate, our $C^2$ estimate,
the Evans-Krylov theorem with the argument in \cite{CNS4}, we get the existence and uniqueness of solution to equation \eqref{eqn}.
Theorem \ref{thm2} is proved.

\section{Proof of Theorem \ref{thm3}}

By Lemmas \ref{lem1}, \ref{lem2}, \ref{lem3} and \ref{lem4} one can prove Theorem \ref{thm3}.
For completeness, we include an outline here.

\begin{proof}
We consider the following function
\[
G (x, \xi) = \log u_{\xi\xi} + \frac{a}{2} |\nabla u|^2 + \frac{A}{2} |x|^2 + \beta \log (v-u),
\]
where $a, A$, and $\beta$ are constants to be determined later.
Suppose that $G$ achieves its maximum at $(x_0, \xi_0)$.
Around $x_0$, we a choose coordinate system such that $\xi_0 = e_1$ and
$u_{ij} (x_0)$ is diagonal such that
\[
u_{11} \geq u_{22} \geq \cdots \geq u_{nn} \; \mbox{at} \; x_0.
\]
This can be done as in \cite{G}.
Thus, the new function defined by
\[
\hat G (x) =  \log u_{11} + \frac{a}{2} |\nabla u|^2 + \frac{A}{2} |x|^2 + \beta \log (v-u)
\]
also attains its maximum at $x_0$.
Differentiate it once to obtain
\begin{equation}
\label{critical}
0 = \frac{u_{11i}}{u_{11}} + a u_i u_{ii} + A x_i + \frac{\beta (v-u)_i}{v-u}.
\end{equation}
Differentiating it twice
and by similar computations as Lemma \ref{lem1} and Lemma \ref{lem2}
we can arrive at
\begin{equation}
\label{In2-1}
\begin{aligned}
0 \geq &\,   - \frac{2}{u_{11}}  \sum_{i\geq 2} \tF^{1i,i1} u_{11i}^2 - \frac{\tF^{ii} u_{11i}^2}{u_{11}^2} + \frac{ a \tF^{ii}u_{ii}^2 }{2} \\
&\; + \frac{A}{2} \sum \tF^{ii} - CA - \frac{\beta \tF^{ii} (v - u)_i^2}{(v-u)^2} - \frac{C\beta}{v-u}.
\end{aligned}
\end{equation}
We remark that in the above inequality we used
\[\begin{aligned}
\sum_k F^{kk} v_{kk} = &\; \sum_k \sum_{k\in\{i_1, \cdots, i_p\}}  \frac{F(\lambda)}{\lambda_{i_1} + \cdots + \lambda_{i_p}} v_{kk} \\
= &\; \sum_{1\leq i_1 < i_2 < \cdots < i_p \leq n} \frac{F(\lambda)}{\lambda_{i_1} + \cdots + \lambda_{i_p}} (v_{i_1i_1} + \cdots + v_{i_p i_p})\\
\end{aligned}\]
which is nonnegative since $v$ is $p$-plurisubharmonic.
Using the same argument as in Lemma \ref{lem3}, we obtain
\[
|u_{ii}| \leq \frac{C}{v-u} \; \mbox{for}\; i \geq n-p+1,
\]
where $C$ depends on $a, A$ and $\beta$.

By \eqref{critical} and the Cauchy-Schwarz inequality, we have
\[
-\frac{\tF^{11} u_{111}^2 }{u_{11}^2} \geq - Ca^2 \tF^{11} u_{11}^2 - CA^2 \tF^{11} - \frac{C\beta^2 \tF^{11}}{(v-u)^2}
\]
and
\[
- \sum_{i\geq 2} \frac{\beta \tF^{ii} (v-u)_i^2}{( v-u)^2} \geq - \frac{3}{\beta} \sum_{i\geq 2} \frac{\tF^{ii} u_{11i}^2}{u_{11}^2}
-\frac{Ca^2}{\beta} \sum_{i\geq 2} \tF^{ii} u_{ii}^2 - \frac{CA^2}{\beta} \sum_{i\geq 2} \tF^{ii}.
\]
Substituting the above two inequalities into \eqref{In2-1} we have
\begin{equation}
\label{In2-2}
\begin{aligned}
0 \geq &\,   - \frac{2}{u_{11}}  \sum_{i\geq 2} \tF^{1i,i1} u_{11i}^2
- \Big(1+\frac{3}{\beta}\Big) \sum_{i\geq 2}\frac{\tF^{ii} u_{11i}^2}{u_{11}^2}
+ \Big( \frac{ a  }{2} -\frac{Ca^2}{\beta} \Big) \tF^{ii}u_{ii}^2\\
&\; + \Big( \frac{A}{2} - \frac{CA^2}{\beta} \Big) \sum \tF^{ii} - Ca^2 \tF^{11} u_{11}^2 - CA^2 \tF^{11} - \frac{C\beta^2 \tF^{11}}{(v-u)^2} \\
&\; - CA - \frac{2 \beta \tF^{11} u_1^2}{(v-u)^2} - \frac{C\beta}{v-u}.
\end{aligned}
\end{equation}
By Lemma \ref{lem2}, similar to \eqref{h11i}, we can get
\begin{equation}
\label{u11i}
2 \sum_{i\geq 2} \tF^{11} \frac{u_{11i}^2}{u_{11}^2} \leq \frac{a}{8} \tF^{ii} u_{ii}^2 + \frac{A}{8} \sum \tF^{ii} +
CA^2 \tF^{11} + \frac{C\beta^2}{(v-u)^2}\tF^{11}
\end{equation}
for sufficiently large $(v-u)u_{11}$ and $A$.
Combining Lemma \ref{lem4} with \eqref{u11i} and choosing $a$ sufficiently small and $\beta$ sufficiently large such that $\delta \geq \frac{3}{\beta}$,
we get from \eqref{In2-2} that
\begin{equation}
\label{In2-4'}
\begin{aligned}
0 \geq &\, \frac{ a  }{4} \tF^{11}u_{11}^2  - \frac{C }{(v-u)^2} \tF^{11}  - CA  - \frac{C}{v-u}\\
\geq &\, \frac{ a  }{8}   \tF^{11}u_{11}^2 - \frac{C}{v-u},
\end{aligned}
\end{equation}
where in the second inequality we assumed $(v-u)u_{11}$ is large enough.

By Lemma \ref{Dinew1} (1) we have that $\tF^{11} u_{11} \geq c_0$, where $c_0 > 0$ depends on $\inf \tilde f$.
From \eqref{In2-4'}, we then obtain
\[
(v-u) u_{11} \leq C
\]
which implies the estimate \eqref{esti-2}.

\end{proof}

\section{Appendix}

\appendix
\renewcommand{\appendixname}{Appendix~\Alph{section}}

In this appendix, we include a proof of Lemma 2.2.
For $A\in P_p$, recall the notations
\[
F (A) := F (\lambda(A)) = \Pi_{1\leq i_1 < \cdots < i_p\leq n} (\lambda_{i_1} + \cdots + \lambda_{i_p}) \; \mbox{and} \; \tilde F = F^{\frac{1}{C_n^p}},
\]
where $\lambda(A) = (\lambda_1, \cdots, \lambda_n)$ are the eigenvalues of $A$ and $P_p$ is defined in Definition \ref{def}.
Suppose that the diagonal matrix $A = diag(\lambda_1, \cdots, \lambda_n)$ belongs to $P_p$
and $\lambda_1 \geq \cdots \geq \lambda_n$.

\begin{lemma}
$\tF^{11} (A) \lambda_1 \geq \frac{1}{n} \tF(A)$
\end{lemma}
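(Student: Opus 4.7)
The plan is to unpack $\tilde F^{11}$ in terms of $F^{11}$, apply the explicit formula for $F^{11}$ from Lemma 1.10 of Dinew, and then bound each summand from below using the ordering of the eigenvalues.

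First, I would recall that
\[
\tilde F^{11}(A) = \frac{1}{C_n^p} F(A)^{\frac{1}{C_n^p}-1} F^{11}(A),
\]
so the claim $\tilde F^{11}(A)\lambda_1 \ge \frac{1}{n}\tilde F(A)$ is equivalent to
\[
F^{11}(A)\,\lambda_1 \;\ge\; \frac{C_n^p}{n}\,F(A).
\]
Using the formula
\[
F^{11}(A) = \sum_{1\in\{i_1,\dots,i_p\}} \frac{F(A)}{\lambda_{i_1}+\cdots+\lambda_{i_p}},
\]
I would multiply through by $\lambda_1$ and work term by term.

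The key observation is that whenever $1\in\{i_1,\dots,i_p\}$, the ordering $\lambda_1\ge\lambda_{i_j}$ for all $j$ gives $\lambda_{i_1}+\cdots+\lambda_{i_p}\le p\lambda_1$, where positivity of these partial sums (from $A\in P_p$) makes the inequality meaningful in the ratio. Hence every summand satisfies
\[
\frac{\lambda_1\, F(A)}{\lambda_{i_1}+\cdots+\lambda_{i_p}} \;\ge\; \frac{F(A)}{p}.
\]
The number of $p$-tuples containing the index $1$ is $C_{n-1}^{p-1}$, so summing gives
\[
F^{11}(A)\,\lambda_1 \;\ge\; \frac{C_{n-1}^{p-1}}{p}\,F(A) \;=\; \frac{C_n^p}{n}\,F(A),
\]
where the last equality is the identity $\frac{C_{n-1}^{p-1}}{p}=\frac{C_n^p}{n}$, verified by expanding both sides as $\frac{(n-1)!}{p!(n-p)!}$.

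There is no real obstacle here; the only subtlety is the combinatorial identity at the end, and making sure the bound $\lambda_{i_1}+\cdots+\lambda_{i_p}\le p\lambda_1$ is used with a positive denominator, which is guaranteed by $A\in P_p$. Dividing by $C_n^p F(A)^{1-1/C_n^p}$ then returns the stated inequality for $\tilde F^{11}$.
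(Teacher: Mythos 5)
Your argument is correct and is essentially identical to the paper's proof: both expand $\tilde F^{11}$ via $F^{11}$, bound each of the $C_{n-1}^{p-1}$ summands using $\lambda_{i_1}+\cdots+\lambda_{i_p}\le p\lambda_1$ (valid since the sums are positive for $A\in P_p$), and conclude with the identity $C_{n-1}^{p-1}/p = C_n^p/n$.
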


\begin{proof}
We have that
\[\begin{aligned}
\tF^{11} (A)  =&\; \frac{1}{C_n^p} [F(A)]^{\frac{1}{C_n^p} -1} \sum_{1\in \{i_1, \cdots, i_p\}}
\frac{F(A)}{\lambda_{i_1} + \cdots + \lambda_{i_p}}\\
\geq &\; \frac{1}{C_n^p} [F(A)]^{\frac{1}{C_n^p} -1} C_{n-1}^{p-1}
\frac{F(A)}{p\lambda_1}\\
= &\; \frac{1}{n\lambda_1} [F(A)]^{\frac{1}{C_n^p}  },
\end{aligned}\]
where in the inequality we used $\lambda_{i_1 } + \cdots + \lambda_{i_p} \leq p \lambda_1$.
\end{proof}

\begin{lemma}
$\sum_{k=1}^n \tF^{kk} (A) \geq p$.
\end{lemma}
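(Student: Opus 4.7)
The plan is to reduce $\sum_k \tilde{F}^{kk}(A)$ to a single expression in which the AM--GM inequality can be applied directly.

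First I would unfold the derivatives using the formulas already recalled in Section~2. Since $\tilde F = F^{1/C_n^p}$, we have $\tilde F^{kk}(A) = \frac{1}{C_n^p} F(A)^{1/C_n^p - 1} F^{kk}(A)$ with $F^{kk}(A) = \sum_{k \in \{i_1,\ldots,i_p\}} \frac{F(A)}{\lambda_{i_1} + \cdots + \lambda_{i_p}}$. Summing over $k$ and switching the order of summation, each $p$-tuple $\{i_1,\ldots,i_p\}$ is hit exactly $p$ times (once for each of its elements), so
\[
\sum_{k=1}^n F^{kk}(A) \;=\; p \sum_{1 \leq i_1 < \cdots < i_p \leq n} \frac{F(A)}{\lambda_{i_1} + \cdots + \lambda_{i_p}}.
\]
Hence
\[
\sum_{k=1}^n \tilde F^{kk}(A) \;=\; \frac{p}{C_n^p}\, F(A)^{1/C_n^p} \sum_{1 \leq i_1 < \cdots < i_p \leq n} \frac{1}{\lambda_{i_1} + \cdots + \lambda_{i_p}}.
\]

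The key step is then to bound the last sum from below via AM--GM. Since $A \in P_p$, every partial sum $\lambda_{i_1} + \cdots + \lambda_{i_p}$ is strictly positive, so AM--GM applied to the $C_n^p$ positive quantities $\frac{1}{\lambda_{i_1}+\cdots+\lambda_{i_p}}$ gives
\[
\frac{1}{C_n^p}\sum_{1 \leq i_1 < \cdots < i_p \leq n} \frac{1}{\lambda_{i_1}+\cdots+\lambda_{i_p}}
\;\geq\; \Bigl(\prod_{1 \leq i_1 < \cdots < i_p \leq n} \frac{1}{\lambda_{i_1}+\cdots+\lambda_{i_p}}\Bigr)^{\!1/C_n^p}
\;=\; F(A)^{-1/C_n^p}.
\]
Plugging this into the previous identity, the factor $F(A)^{1/C_n^p}$ cancels and we obtain $\sum_k \tilde F^{kk}(A) \geq p$, which is the desired inequality.

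There is no real obstacle here beyond correctly identifying that AM--GM is the right mechanism: once the derivative sum is rewritten as $\frac{p}{C_n^p}\tilde F(A) \cdot \sum \frac{1}{\lambda_{i_1}+\cdots+\lambda_{i_p}}$, the geometric mean of the reciprocals is exactly $\tilde F(A)^{-1}$ by the very definition of $F$, so the estimate is sharp and the constant $p$ comes out cleanly. Equality is attained when all $p$-sums $\lambda_{i_1}+\cdots+\lambda_{i_p}$ coincide, e.g.\ at the identity matrix.
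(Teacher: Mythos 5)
Your proof is correct and follows essentially the same route as the paper: rewrite $\sum_k \tilde F^{kk}$ as $\frac{p}{C_n^p}\tilde F(A)\sum \frac{1}{\lambda_{i_1}+\cdots+\lambda_{i_p}}$ (the paper writes the multiplicity as $nC_{n-1}^{p-1}/C_n^p$, which equals $p$) and then apply AM--GM to the $C_n^p$ reciprocals, whose geometric mean is exactly $\tilde F(A)^{-1}$. Nothing is missing.
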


\begin{proof}
We have that
\[\begin{aligned}
\sum_{k=1}^n \tF^{kk} (A) 
= &\; \frac{1}{C_n^p} [F(A)]^{\frac{1}{C_n^p} -1} \sum_{k=1}^{n} \sum_{k\in \{i_1, \cdots, i_p\}}
\frac{F(A)}{\lambda_{i_1} + \cdots + \lambda_{i_p}}\\
= &\; \frac{nC_{n-1}^{p-1}}{[C_n^p]^2} [F(A)]^{\frac{1}{C_n^p}}  \sum_{ 1\leq i_1 < \cdots < i_p \leq n}
\frac{1}{\lambda_{i_1} + \cdots + \lambda_{i_p}}\\
\geq &\; \frac{nC_{n-1}^{p-1}}{[C_n^p]^2} [F(A)]^{\frac{1}{C_n^p}}  \frac{C_n^p}{[F(A)]^{\frac{1}{C_n^p}}} = p,
\end{aligned}\]
where the inequality of arithmetic and geometric means was used in the inequality.
\end{proof}

\begin{lemma}
 $\sum_{k=1}^n F^{kk} (A) \lambda_{k} = C_n^p F(A)$. 
\end{lemma}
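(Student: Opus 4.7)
The plan is to observe that $F$ is a homogeneous polynomial of degree $C_n^p$ in the variables $\lambda_1, \ldots, \lambda_n$, since by definition it is a product of $C_n^p$ linear forms $\lambda_{i_1} + \cdots + \lambda_{i_p}$. The identity $\sum_k F^{kk}(A) \lambda_k = C_n^p F(A)$ is then an immediate application of Euler's homogeneous function theorem.

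If one prefers a direct computation instead of invoking Euler, I would substitute the formula for $F^{kk}(A)$ recalled just before Lemma 2.2, namely
\[
F^{kk}(A) = \sum_{k \in \{i_1, \ldots, i_p\}} \frac{F(A)}{\lambda_{i_1} + \cdots + \lambda_{i_p}},
\]
multiply by $\lambda_k$, and swap the order of the two resulting summations. For each fixed $p$-tuple $(i_1, \ldots, i_p)$, the inner sum becomes
\[
\sum_{k \in \{i_1, \ldots, i_p\}} \frac{\lambda_k F(A)}{\lambda_{i_1} + \cdots + \lambda_{i_p}} = F(A),
\]
and then summing over all $\binom{n}{p} = C_n^p$ $p$-tuples yields $C_n^p F(A)$.

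There is essentially no obstacle here; the statement is a structural consequence of the homogeneity of $F$, and both proofs fit in a few lines. I would probably present the direct sum-swap version to keep the appendix self-contained and consistent in style with the preceding two lemmas, which also proceed by manipulating the explicit formula for $F^{kk}(A)$.
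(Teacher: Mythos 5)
Your direct computation is exactly the paper's proof: substitute the formula for $F^{kk}$, swap the order of summation, and note that each of the $C_n^p$ $p$-tuples contributes $F(A)$. The Euler homogeneity remark is a correct alternative phrasing of the same fact, so nothing further is needed.
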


\begin{proof}
Observe that
\[\begin{aligned}
\sum_{k=1}^n F^{kk} (A) \lambda_{k}
= &\; F(A) \sum_{k=1}^{n} \sum_{k\in \{i_1, \cdots, i_p\}}
\frac{\lambda_k}{\lambda_{i_1} + \cdots + \lambda_{i_p}}\\
= &\; F(A) \frac{n C_{n-1}^{p-1}}{p} = C_n^p F(A).
\end{aligned}\]
\end{proof}

\begin{lemma}
$\sum_{k=1}^{n} \tF^{kk}(A) \lambda_k = \tF(A).$
\end{lemma}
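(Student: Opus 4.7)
The plan is to derive this identity as an immediate corollary of the preceding Lemma A.3, using the chain-rule relation between the derivatives of $\tF$ and of $F$.

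Concretely, I would first recall that by definition $\tF = F^{1/C_n^p}$, so differentiation yields
\[
\tF^{kk}(A) = \frac{1}{C_n^p}\, F(A)^{\frac{1}{C_n^p} - 1}\, F^{kk}(A).
\]
Multiplying by $\lambda_k$ and summing over $k$, this factors as
\[
\sum_{k=1}^n \tF^{kk}(A)\,\lambda_k \;=\; \frac{1}{C_n^p}\, F(A)^{\frac{1}{C_n^p} - 1} \sum_{k=1}^n F^{kk}(A)\,\lambda_k.
\]
The inner sum is exactly what the previous Lemma A.3 evaluates, namely $\sum_k F^{kk}(A)\,\lambda_k = C_n^p\, F(A)$. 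Substituting gives $\sum_k \tF^{kk}(A)\,\lambda_k = F(A)^{1/C_n^p} = \tF(A)$, as desired.

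Alternatively, and perhaps more conceptually, one may observe that $F(\lambda)$ is a product of $C_n^p$ linear forms in $\lambda$, hence homogeneous of degree $C_n^p$; consequently $\tF$ is homogeneous of degree one in $\lambda$, and Euler's identity applied to $\tF$ yields the formula in a single line. Either route works, and there is no genuine obstacle here: the identity is essentially a restatement of Lemma A.3 after accounting for the $C_n^p$-th root, and its main purpose in the paper is to record the degree-one homogeneity of $\tF$ in a form directly usable alongside items (1)--(4) of Lemma \ref{Dinew1}.
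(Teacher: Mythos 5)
Your first argument is exactly the paper's proof: apply the chain rule $\tF^{kk} = \frac{1}{C_n^p}F^{\frac{1}{C_n^p}-1}F^{kk}$ and invoke the preceding lemma $\sum_k F^{kk}(A)\lambda_k = C_n^p F(A)$. The proposal is correct and matches the paper's approach (the Euler-identity remark is a fine equivalent observation).
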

\begin{proof}
Observe that
\[
\sum_{k=1}^n \tF^{kk} (A) \lambda_k
= \frac{1}{C_n^p} [F(A)]^{\frac{1}{C_n^p} -1} \sum_{k=1}^{n} F^{kk} (A) \lambda_k = \tF(A).
\]
\end{proof}

\begin{lemma}
There is a constant $\theta = \theta(n, p)$ such that, for all $j \geq n-p+1$,
\[
F^{jj} (A) \geq \theta \sum_{i=1}^n F^{ii}(A).
\]
\end{lemma}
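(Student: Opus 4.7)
The plan is to exploit the explicit formula (recalled just before Lemma 2.2)
\[
F^{kk}(A) \;=\; \sum_{\substack{k\in\{i_1,\ldots,i_p\}\\1\le i_1<\cdots<i_p\le n}} \frac{F(A)}{\lambda_{i_1}+\cdots+\lambda_{i_p}},
\]
together with the elementary observation that because $\lambda_1\ge\cdots\ge\lambda_n$, among all $p$-subsets $S\subset\{1,\ldots,n\}$ the sum $\sum_{l\in S}\lambda_l$ is minimized by the ``tail'' subset $S_0:=\{n-p+1,n-p+2,\ldots,n\}$. Set $m:=\lambda_{n-p+1}+\cdots+\lambda_n$; by the assumption $A\in P_p$ we have $m>0$.

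First I would obtain a universal upper bound: each $F^{ii}$ is a sum of exactly $C_{n-1}^{p-1}$ positive terms whose denominators are all bounded below by $m$, so
\[
F^{ii}(A)\;\le\; C_{n-1}^{p-1}\,\frac{F(A)}{m},\qquad\text{and hence}\qquad \sum_{i=1}^n F^{ii}(A)\;\le\; n\,C_{n-1}^{p-1}\,\frac{F(A)}{m}.
\]
Second, for any $j\ge n-p+1$ the subset $S_0$ contains the index $j$, so dropping all other subsets in the formula for $F^{jj}$ yields
\[
F^{jj}(A)\;\ge\;\frac{F(A)}{m}.
\]
Dividing these two estimates gives the claim with the explicit constant $\theta=\theta(n,p)=1/(n\,C_{n-1}^{p-1})$.

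There is no real obstacle here: the entire argument hinges on the fact that the minimum-sum subset $S_0$ is the \emph{same} one that contains every index $j\ge n-p+1$, which is precisely the range in the statement. This simultaneous role of $S_0$ (it gives the lower bound on $F^{jj}$ and controls every denominator appearing in $\sum_i F^{ii}$ from below) is what forces the index condition $j\ge n-p+1$ to appear, and makes the dependence of $\theta$ on $n$ and $p$ completely explicit.
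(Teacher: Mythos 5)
Your proof is correct and follows essentially the same route as the paper: both arguments rest on the observation that the tail subset $\{n-p+1,\ldots,n\}$ simultaneously realizes the minimal $p$-sum and contains every index $j\geq n-p+1$, giving $F^{jj}\geq F(A)/(\lambda_{n-p+1}+\cdots+\lambda_n)$, while $\sum_i F^{ii}$ is controlled by a combinatorial multiple of the same quantity. Your explicit constant $\theta=1/(nC_{n-1}^{p-1})$ agrees with what the paper's computation yields.
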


\begin{proof}
Note that, for $j\geq n-p+1$,
\[
F^{jj} (A) \geq \frac{F(A)}{\lambda_{n-p+1} + \cdots + \lambda_n}
\geq \frac{1}{C_{n}^p} \sum_{ 1\leq i_1 < \cdots < i_p \leq n } \frac{F(A)}{\lambda_{i_1} + \cdots + \lambda_{i_p}},
\]
and
\[\begin{aligned}
\sum_{k=1}^n F^{kk}(A) =&\; \sum_{k=1}^{n} \sum_{k\in \{i_1, \cdots, i_p\}}
\frac{F(A)}{\lambda_{i_1} + \cdots + \lambda_{i_p}}\\
= &\; \frac{ nC_{n-1}^{p-1} }{C_n^p} \sum_{ 1\leq i_1 < \cdots < i_p \leq n}
\frac{F(A)}{\lambda_{i_1} + \cdots + \lambda_{i_p}}.
\end{aligned}\]
Thus the desired inequality is proved.
\end{proof}

\end{document}